\documentclass[11pt,reqno]{amsart}

\usepackage{amsmath,amssymb}
\usepackage{microtype}
\usepackage[hidelinks]{hyperref}
\allowdisplaybreaks
\numberwithin{equation}{section}

\newtheorem{theorem}{Theorem}[section]
\newtheorem{proposition}[theorem]{Proposition}
\newtheorem{lemma}[theorem]{Lemma}
\newtheorem{corollary}[theorem]{Corollary}
\theoremstyle{remark}
\newtheorem{remark}[theorem]{Remark}

\newcommand{\E}{\mathbb E}
\newcommand{\e}{\mathrm e}
\newcommand{\dd}{\,\mathrm d}
\newcommand{\MP}{\mathrm{MP}}
\renewcommand\le{\leqslant}
\renewcommand\ge{\geqslant}

\title[Fractional Moments and Shape Transitions in the Unitary Case]{Dimension Monotonicity in Laguerre Ensembles I: Fractional Moments and Shape Transitions in the Unitary Case}
\author{Ondrej Hutn\'{i}k}
\address{Institute of Mathematics, Faculty of Science, Pavol Jozef \v{S}af\'{a}rik University in Ko\v{s}ice, Jesenn\'{a} 5, 040 01 Ko\v{s}ice, Slovakia}
\email{ondrej.hutnik@upjs.sk}
\subjclass[2020]{Primary 60B20; Secondary 15A18, 33C45}
\keywords{Laguerre unitary ensemble, fractional moment, dimension monotonicity, finite-size correction, shape transition, critical shape}
\hypersetup{pdftitle={Dimension Monotonicity in Laguerre Ensembles I: Fractional Moments and Shape Transitions in the Unitary Case},pdfauthor={Ondrej Hutnik}}

\begin{document}

\begin{abstract}
Let $W_{N,N+\lambda}$ have the Laguerre unitary distribution with size $N$ and
real shape $\lambda\ge0$.  For $s>0$, we consider the normalized moment
\[
 C_{s,\lambda}(N)=N^{-s-1}\E[\operatorname{Tr}W_{N,N+\lambda}^{s}]
\]
and its dimension decrement
$\Gamma_{N,s,\lambda}=C_{s,\lambda}(N)-C_{s,\lambda}(N+1)$.
Iterating the Laguerre moment recurrence separates this decrement into a square
source and a nonnegative shape source.  The square source gives the complete
finite-dimensional sign diagram: $C_{s,0}(N)$ decreases for $0<s<1$ and
$s>2$, increases for $1<s<2$, and is constant for $s\in\{1,2\}$.  For every
$s>0$ and every $N$, the decrement is strictly increasing in $\lambda$.
The same decomposition determines the critical shrinking-shape scales:
$N^{-2s}$ for $0<s<1/2$, $(\log N)/N$ for $s=1/2$, and $N^{-1}$ for
$s>1/2$.  In the convex range $1<s<2$, where the two sources have opposite
signs, the transition occurs when $N\lambda_N$ is of order one, with critical
constant
\[
 \tau_s^*=\frac{s(s-1)(2-s)}{6(2s-1)}.
\]
In the convex range, both crossings are unique for every finite $N$, and we
determine their locations to second order.  At $s=1/2$ we also obtain a
bounded-shape two-term expansion, which supplies the unitary estimates used in
the companion orthogonal paper.
\end{abstract}

\maketitle

\section{Introduction}

\subsection{The model and the finite-dimensional question}
For real $\lambda\ge0$, let $W_{N,N+\lambda}$ denote the size-$N$ Laguerre
unitary ensemble (LUE).  Its ordered eigenvalues have density proportional to
\cite[Chapter~3]{Forrester}
\[
 \prod_{i=1}^N x_i^\lambda \e^{-x_i}
 \prod_{1\le i<j\le N}(x_j-x_i)^2,
 \qquad 0<x_1<\cdots<x_N.
\]
When $\lambda$ is a nonnegative integer, this is the eigenvalue law of
$X_{N,\lambda}^{\mathbb C}(X_{N,\lambda}^{\mathbb C})^*$, where
$X_{N,\lambda}^{\mathbb C}$ is an $N\times(N+\lambda)$ matrix of independent
standard complex Gaussian entries with density $\pi^{-1}\e^{-|z|^2}$.
Although the Laguerre density is defined for every real $\lambda>-1$, we work
with $\lambda\ge0$ throughout.

For $s>0$ set
\begin{equation}\label{eq:def-C-Gamma}
 \begin{aligned}
 Q_{s,\lambda}(N)&:=\E[\operatorname{Tr}W_{N,N+\lambda}^{s}],\\ 
 C_{s,\lambda}(N)&:=\frac{Q_{s,\lambda}(N)}{N^{s+1}},\\
 \Gamma_{N,s,\lambda}&:=C_{s,\lambda}(N)-C_{s,\lambda}(N+1).
 \end{aligned}
\end{equation}
The normalization removes the leading $N^{s+1}$ growth of the moment.  Thus
$\Gamma_{N,s,\lambda}>0$ means that the normalized moment decreases when the
dimension is increased from $N$ to $N+1$ while the excess shape $\lambda$ is
kept fixed.

For every fixed $s>0$ and $\lambda\ge0$, the moment form of the
Marchenko--Pastur law gives \cite{MarchenkoPastur,CundenEtAl}
\begin{equation}\label{eq:MP-limit}
 C_{s,\lambda}(N)\longrightarrow
 m_s:=\int_0^4x^s\,\mu_{\MP}(\dd x)
 =\frac{4^s\Gamma(s+1/2)}{\sqrt\pi\,\Gamma(s+2)}.
\end{equation}
The limit alone does not determine whether $C_{s,\lambda}(N)$ approaches
$m_s$ from above or below, nor whether the next step in dimension is upward or
downward.  These are the finite-dimensional questions studied here.

\subsection{Square shape and dependence on the shape parameter}
For integer $\lambda$, the half moment $s=1/2$ is the normalized expected
nuclear norm of $X_{N,\lambda}^{\mathbb C}$.  Its dimension monotonicity arose
in Gaussian rounding for the little Grothendieck problem over the unitary group
\cite[Conjecture~8]{BandeiraKennedySinger}.  Abreu reduced the complex square problem to Laguerre integral identities
\cite{Abreu}, and the resulting monotonicity statement was proved in the
first arXiv version of the present work~\cite{HutnikComplexOriginal}.  Baslingker and Dan later obtained the
half-moment monotonicity for every real $\lambda\ge0$ from the closed
Laguerre moment recurrence \cite{BaslingkerDan,CundenEtAl}.  Abreu and Patil
have recently sharpened the square case $(s,\lambda)=(1/2,0)$ with explicit
finite-dimensional bounds and complete asymptotic expansions
\cite{AbreuPatil}.

The closed recurrence is therefore an existing input, and the half-moment
monotonicity for real shape is already known.  The new results here concern
the two-parameter consequences of that recurrence: the finite-dimensional sign
diagram in the moment order, strict shape monotonicity of the decrement for
every $s>0$, the three shrinking-shape scales, and the finite-dimensional and
second-order crossings in $1<s<2$.  The square half-moment case remains the
special slice on which Abreu--Patil obtain sharper quantitative information.

The present paper treats the full two-parameter family $(s,\lambda)$.  At
square shape, the sign is determined completely:
\[
 \Gamma_{N,s,0}
 \begin{cases}
 >0,&0<s<1\ \text{or}\ s>2,\\
 <0,&1<s<2,\\
 =0,&s=1,2.
 \end{cases}
\]
This holds for every $N\ge1$.  Thus, the normalized square moments decrease in
the concave range $0<s<1$, increase in the convex range $1<s<2$, and decrease
again for $s>2$.

The dependence of the decrement on the shape is also monotone.  For every
$s>0$ and every $N\ge1$,
\[
 \partial_\lambda\Gamma_{N,s,\lambda}>0,
 \qquad \lambda\ge0.
\]
Consequently, the two decreasing square ranges remain decreasing for every
$\lambda\ge0$.  In the convex range $1<s<2$, by contrast, a positive shape can change the sign of the total decrement, which leads to the transition described below.

\subsection{Three critical shape scales}
The proof starts from an exact formula obtained by iterating the Laguerre
moment recurrence.  It writes $\Gamma_{N,s,\lambda}$ as the sum of two terms:
a square source, present already at $\lambda=0$, and a shape source, which is
nonnegative and is strictly positive for $\lambda>0$.  The square source is a
fourth-order remainder for the power function.  Its sign gives the square
diagram above, while its summability determines the size of a shape
perturbation that can change the leading finite-size correction.

There are three regimes.  If $\lambda_N\to0$, the relevant scales are
\[
 \lambda_N\asymp N^{-2s},\qquad 0<s<\frac12,
\]
\[
 \lambda_N\asymp\frac{\log N}{N},\qquad s=\frac12,
\]
and
\[
 \lambda_N\asymp N^{-1},\qquad s>\frac12.
\]
The logarithm at $s=1/2$ is caused by the harmonic leading term of the square
source.  In this sense the average-singular-value problem is the borderline
case between a summable and a nonsummable square correction.

\subsection{The convex transition}
The range $1<s<2$ is the only one in which the square and shape sources have
opposite signs.  Here the relevant scaling is $N\lambda_N\to\tau$.  The
critical constant for the decrement is
\begin{equation*}
 \tau_s^*=\frac{s(s-1)(2-s)}{6(2s-1)}.
\end{equation*}
The level and decrement do not cross at the same value.  Their finite-$N$
zeros, denoted by $\lambda^{\mathrm{lev}}_{N,s}$ and
$\lambda^{\mathrm{dec}}_{N,s}$, are unique and satisfy
\[
 \lambda^{\mathrm{lev}}_{N,s}
 =\frac{\tau_s^*}{2N}+o(N^{-2}),
 \qquad
 \lambda^{\mathrm{dec}}_{N,s}
 =\frac{\tau_s^*}{N}-\frac{\tau_s^*}{2N^2}+o(N^{-2}).
\]
These asymptotics imply that, for all sufficiently large $N$, there is an
interval in which the normalized moment is already above the Marchenko--Pastur
limit but is still increasing with the dimension.  The level crossing and the
local monotonicity crossing are therefore different finite-dimensional
quantities.

\subsection{The half moment and recent square results}
At $s=1/2$ we write
$C_\lambda(N)=C_{1/2,\lambda}(N)$ and
$\Gamma_{N,\lambda}=\Gamma_{N,1/2,\lambda}$.  The general theory gives,
uniformly for $\lambda$ in a bounded interval,
\begin{align*}
 \Gamma_{N,\lambda}
 &=\frac{2\lambda}{\pi N^2}
 +\frac{1-4\lambda^2}{8\pi}\frac{\log N}{N^3}
 +O(N^{-3}),\\
 C_\lambda(N)-\frac8{3\pi}
 &=\frac{2\lambda}{\pi N}
 +\frac{1-4\lambda^2}{16\pi}\frac{\log N}{N^2}
 +O(N^{-2}).
\end{align*}
At $\lambda=0$, Abreu and Patil obtain sharper square-specific asymptotics,
including the constant terms.  The purpose of the formulas above is different:
they are uniform in the shape and exhibit the coefficient $1-4\lambda^2$,
which vanishes at $\lambda=1/2$.  The corresponding positive-shape and shape-derivative bounds are then used in the companion orthogonal paper (Paper~II)~\cite{HutnikLOE}.

\subsection{Proof strategy and organization}
The finite-dimensional argument starts from the closed dimension recurrence of
Cunden, Mezzadri, O'Connell and Simm.  After normalization, one iteration
already separates the term present at square shape from the term created by
$\lambda>0$; iterating in the dimension gives the two-source formula used
throughout the paper.  The square source is controlled by a one-dimensional
Taylor remainder.  The shape dependence requires two further ingredients:
positive association of the ordered LUE eigenvalues, which gives monotonicity
of $C_{s,\lambda}(N)$, and a relative derivative estimate obtained by
normalizing the same recurrence by its one-dimensional solution.

Section~\ref{sec:exact} develops these finite-dimensional tools and proves
strict shape monotonicity of the decrement.  Section~\ref{sec:square} applies
the decomposition at square shape and on the two sign-positive moment ranges.
Section~\ref{sec:shape} compares the two sources asymptotically, then treats the
finite-dimensional and second-order convex transitions.  Finally,
Section~\ref{sec:half} specializes the results to the half moment and records
the estimates used in Paper~II.

\section{The recurrence and the two-source formula}\label{sec:exact}

\subsection{The recurrence and its iteration}

The starting point is the closed dimension recurrence obtained from the
Laguerre moment theory of Cunden, Mezzadri, O'Connell and Simm
\cite[Section~4.2, after (4.16)]{CundenEtAl}.  Equivalently, it follows from
the recurrence recorded by Baslingker and Dan \cite[(1.3)]{BaslingkerDan}:
\begin{equation}\label{eq:Q-recurrence}
 Q_{s,\lambda}(N+1)
 =\left(2+\frac{s(s+1)}{N(N+\lambda)}\right)Q_{s,\lambda}(N)
 -Q_{s,\lambda}(N-1),
 \quad N\ge1,
\end{equation}
with the convention $Q_{s,\lambda}(0):=0$.  It is valid for every real $s>0$ in the parameter range used here.  Thus, all moments entering the argument are ordinary finite LUE integrals; no analytic continuation in $s$ is needed.

The normalization in \eqref{eq:def-C-Gamma} removes the leading power of
$N$.  What remains at square shape is a fourth-order remainder of the power
function.  To keep that remainder visible, let $r=s+1$ and set
\[
 F_s(x):=(1+x)^r+(1-x)^r-2-s(s+1)x^2,
 \qquad 0\le x\le1.
\]
With this notation, \eqref{eq:Q-recurrence} becomes, for $N\ge2$,
\begin{equation}\label{eq:Gamma-step}
\begin{aligned}
 \Gamma_{N,s,\lambda}
 &={}\left[
 \left(\frac{N}{N+1}\right)^rF_s(1/N)
 +\frac{s(s+1)\lambda N^{s-1}}
 {(N+\lambda)(N+1)^r}\right]C_{s,\lambda}(N)\\
 &\quad+\left(\frac{N-1}{N+1}\right)^r
 \Gamma_{N-1,s,\lambda}.
\end{aligned}
\end{equation}
For $N=1$, the convention $Q_{s,\lambda}(0)=0$ gives the same formula
without the last term.  This is the base case for the iteration below.

The two terms in the bracket have different roles: the first is already
present at $\lambda=0$, whereas the second vanishes there and is positive for
$\lambda>0$.  Iteration preserves this separation.

\begin{proposition}[Two-source decomposition]\label{prop:exact}
For every $s>0$, $\lambda\ge0$, and $N\ge1$,
\begin{equation*}
\begin{aligned}
 \Gamma_{N,s,\lambda}
 &=\frac{1}{[N(N+1)]^{s+1}}
   \sum_{k=1}^N k^{2s+2}F_s(1/k)C_{s,\lambda}(k)\\
 &\quad+
 \frac{s(s+1)\lambda}{[N(N+1)]^{s+1}}
   \sum_{k=1}^N\frac{k^{2s}}{k+\lambda}C_{s,\lambda}(k).
\end{aligned}
\end{equation*}
\end{proposition}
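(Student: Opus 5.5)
The plan is to unroll the one-step identity \eqref{eq:Gamma-step} by induction on $N$, after multiplying by a weight that makes the recursion telescope. Set $u_N:=[N(N+1)]^{r}\Gamma_{N,s,\lambda}$ with $r=s+1$. Multiplying \eqref{eq:Gamma-step} by $[N(N+1)]^r$, the last term becomes
\[
 [N(N+1)]^r\Bigl(\tfrac{N-1}{N+1}\Bigr)^r\Gamma_{N-1,s,\lambda}=[(N-1)N]^r\Gamma_{N-1,s,\lambda}=u_{N-1}.
\]
The bracket, multiplied by $[N(N+1)]^r$, becomes
\[
 N^{2r}F_s(1/N)+\frac{s(s+1)\lambda N^{s-1}N^{r}}{N+\lambda}
 =N^{2s+2}F_s(1/N)+\frac{s(s+1)\lambda N^{2s}}{N+\lambda},
\]
using $N^{s-1}N^{s+1}=N^{2s}$. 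So $u_N=u_{N-1}+N^{2s+2}F_s(1/N)C_{s,\lambda}(N)+s(s+1)\lambda N^{2s}(N+\lambda)^{-1}C_{s,\lambda}(N)$ for $N\ge2$.

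For $N=1$ the remark after \eqref{eq:Gamma-step} gives the same formula with no $\Gamma_0$ term, so $u_1$ equals the $k=1$ summand. Summing the telescoping recursion from $k=1$ to $N$ then gives the claimed identity, after dividing by $[N(N+1)]^{s+1}$.

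The substantive step is checking \eqref{eq:Gamma-step} itself, which the paper states as a rewriting of \eqref{eq:Q-recurrence}. To verify it, I would substitute $Q(k)=k^rC(k)$ and compute $\Gamma_N-(\tfrac{N-1}{N+1})^r\Gamma_{N-1}$. Dividing the recurrence by $(N+1)^r$, and using $Q(N-1)/(N+1)^r=(\tfrac{N-1}{N+1})^rC(N-1)$, gives
\[
 C(N+1)=\Bigl[\bigl(2+\tfrac{s(s+1)}{N(N+\lambda)}\bigr)\tfrac{N^r}{(N+1)^r}\Bigr]C(N)-\Bigl(\tfrac{N-1}{N+1}\Bigr)^rC(N-1).
\]
Substituting $C(N-1)=\Gamma_{N-1}+C(N)$ in the last term leaves a coefficient of $C(N)$ in $\Gamma_N$ equal to
\[
 1+\Bigl(\tfrac{N-1}{N+1}\Bigr)^r-2\Bigl(\tfrac{N}{N+1}\Bigr)^r-\frac{s(s+1)N^{r}}{N(N+\lambda)(N+1)^r}.
\]
Multiplying by $(N+1)^r/N^r$ gives $(1+1/N)^r+(1-1/N)^r-2-s(s+1)/(N(N+\lambda))$. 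Now split $\frac{1}{N(N+\lambda)}=\frac1{N^2}-\frac{\lambda}{N^2(N+\lambda)}$. This turns the expression into $F_s(1/N)+s(s+1)\lambda/(N^2(N+\lambda))$, and multiplying back by $N^r/(N+1)^r$ recovers the bracket, since $N^{r-2}=N^{s-1}$.

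For $N=1$ the convention $Q(0)=0$ means $C(0)$ never enters. One checks directly that $\Gamma_1=C(1)-C(2)$, with $C(2)=(2+s(s+1)/(1+\lambda))2^{-r}C(1)$, matches the formula without the $\Gamma_0$ term. Equivalently, set $\Gamma_0:=C(0)-C(1)$ with weight $(0/2)^r=0$.

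I expect no real obstacle. The only delicate points are the $N=1$ base case and the algebraic split of $1/(N(N+\lambda))$ that separates the square source from the shape source.
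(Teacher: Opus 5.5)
Your proposal is correct and is essentially the paper's argument: multiplying by $[N(N+1)]^{s+1}$ is the integrating factor behind the paper's telescoping product $\prod_{j=k+1}^N\bigl(\frac{j-1}{j+1}\bigr)^{s+1}=\bigl(\frac{k(k+1)}{N(N+1)}\bigr)^{s+1}$, and your derivation of \eqref{eq:Gamma-step} via the split $\frac{1}{N(N+\lambda)}=\frac1{N^2}-\frac{\lambda}{N^2(N+\lambda)}$, together with the direct $N=1$ check, correctly supplies the step the paper only states. (Drop the aside ``set $\Gamma_0:=C(0)-C(1)$'', since $C(0)=Q(0)/0^{s+1}$ is undefined; what actually vanishes at $N=1$ is $(N-1)^{s+1}C(N-1)=Q(0)=0$, and your direct check already uses this.)
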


We call the first and second lines the \textit{square source} and the \textit{shape source}, respectively.

\begin{proof}
Iterate \eqref{eq:Gamma-step} from the $N=1$ base case.  The propagation
factor telescopes as
\[
 \prod_{j=k+1}^N\left(\frac{j-1}{j+1}\right)^{s+1}
 =\left(\frac{k(k+1)}{N(N+1)}\right)^{s+1}.
\]
Substituting the bracket in \eqref{eq:Gamma-step} and simplifying gives the
two displayed sums.
\end{proof}

\subsection{The sign of the square source}

The following integral representation determines the sign of the square source for every positive moment order.

\begin{lemma}[Sign of the square source]\label{lem:F-sign}
For every $s>0$ and $0<x\le1$,
\begin{equation}\label{eq:F-series}
 F_s(x)=2\sum_{j=2}^\infty\binom{s+1}{2j}x^{2j}.
\end{equation}
For $0<x<1$ it also has the integral representation
\begin{equation}\label{eq:F-integral}
 F_s(x)=\frac{s(s+1)(s-1)(s-2)}{6}
 \int_0^x(x-t)^3
 \bigl((1+t)^{s-3}+(1-t)^{s-3}\bigr)\,\dd t.
\end{equation}
Moreover,
\begin{align*}
 F_s(x)&>0, \qquad 0<s<1\ \text{or}\ s>2,\\
 F_s(x)&<0, \qquad 1<s<2,\\
 F_1(x)&=F_2(x)=0.
\end{align*}
As $x\downarrow0$,
\begin{equation}\label{eq:F-leading}
 F_s(x)=\eta_sx^4+O_s(x^6),
 \quad
 \eta_s:=2\binom{s+1}{4}
 =\frac{s(s+1)(s-1)(s-2)}{12}.
\end{equation}
\end{lemma}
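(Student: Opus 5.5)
The plan has three parts: derive the series from the binomial expansion, derive the integral formula from Taylor's theorem, and read off the signs and leading term from the integral.

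\emph{The series.} Expand $(1+x)^{r}$ and $(1-x)^{r}$ with $r=s+1$ by the binomial series. For $0\le x<1$ both converge absolutely. At $x=1$ we have $r>1>0$, so the coefficients $\binom{r}{n}$ are $O(n^{-r-1})$; hence both series still converge absolutely, and Abel's theorem lets us pass to $x=1$. Adding the two expansions cancels the odd powers. The $j=0$ term is $2$ and the $j=1$ term is $2\binom{r}{2}x^2=s(s+1)x^2$, and these are exactly what $F_s$ subtracts. This leaves \eqref{eq:F-series}. The leading coefficient is $2\binom{s+1}{4}=\eta_s$, which expands to $\frac{s(s+1)(s-1)(s-2)}{12}$, and the tail is $O_s(x^6)$ uniformly on, say, $[0,1/2]$. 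This gives \eqref{eq:F-leading}.

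\emph{The integral.} Set $g(x)=(1+x)^r+(1-x)^r$, which is smooth on $(-1,1)$. Then $g(0)=2$, $g'(0)=0$, $g''(0)=2r(r-1)=2s(s+1)$ and $g'''(0)=0$. Taylor's formula with integral remainder of order four gives
\[
F_s(x)=g(x)-g(0)-\tfrac{1}{2}g''(0)x^2=\frac{1}{6}\int_0^x (x-t)^3 g^{(4)}(t)\,\dd t,
\]
where $g^{(4)}(t)=r(r-1)(r-2)(r-3)\bigl((1+t)^{r-4}+(1-t)^{r-4}\bigr)$. With $r=s+1$ we have $r(r-1)(r-2)(r-3)=(s+1)s(s-1)(s-2)$ and $r-4=s-3$, which is exactly \eqref{eq:F-integral}. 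The only point needing care is $s<3$, where $(1-t)^{s-3}$ blows up as $t\to1$. Since $x<1$ is fixed, the integrand is continuous on $[0,x]$, so this causes no difficulty; this is why the integral form is stated only for $0<x<1$.

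\emph{Signs.} For $0<x<1$ the integral in \eqref{eq:F-integral} is strictly positive, so the sign of $F_s(x)$ equals the sign of $s(s+1)(s-1)(s-2)$. This product is positive for $0<s<1$ and for $s>2$, negative for $1<s<2$, and zero for $s\in\{1,2\}$.

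The endpoint $x=1$ is the main subtlety, since the integral form is unavailable there. For $s\in\{1,2\}$ the series \eqref{eq:F-series} terminates with all terms zero (for $s=1,2$ we have $r=2,3<4$), so $F_1\equiv F_2\equiv0$. For other $s$ I would use the closed form $F_s(1)=2^{s+1}-2-s(s+1)$ and argue in one of two ways.
\begin{itemize}
\item By continuity, $F_s(1)=\lim_{x\uparrow1}F_s(x)$, which gives non-strict signs. Strictness then comes from $h(s)=2^{s+1}-2-s(s+1)$: it is strictly convex minus a quadratic, vanishes at $s=0,1,2$, and $h'''=(\log 2)^3 2^{s+1}>0$. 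So $h$ has no further zeros, and its sign pattern matches.
\item Alternatively, apply the integral form at $x=1$ as an improper integral. It converges whenever $s>0$, since $\int_0^1(1-t)^3(1-t)^{s-3}\,\dd t<\infty$, so strictness follows directly by monotone convergence.
\end{itemize}
I would use the second route, as it is cleaner.
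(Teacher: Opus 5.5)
Your proof is correct and follows the paper's route: the binomial series, a fourth-order Taylor remainder (the paper expands $u\mapsto u^{s+1}$ about $u=1$ at $1\pm x$, which is the same as your expansion of $g$ about $0$), and the sign read off from the prefactor $s(s+1)(s-1)(s-2)/6$. Your monotone-convergence passage to the improper integral at $x=1$ is more careful than the paper's proof, since the paper's remark that continuity handles $x=1$ by itself only yields non-strict signs there.
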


\begin{proof}
The binomial identity gives \eqref{eq:F-series}; for noninteger $s$ the
series is absolutely convergent also at $x=1$.  Applying Taylor's formula
with integral remainder to $t^{s+1}$ about $t=1$, once at $1+x$ and once
at $1-x$, gives \eqref{eq:F-integral}.  Its integrand is positive, so its
prefactor gives all the asserted strict signs for $0<x<1$; continuity gives
$x=1$.  At $s\in\{1,2\}$ the remainder vanishes identically.  Finally, the first
term of \eqref{eq:F-series} gives \eqref{eq:F-leading}.
\end{proof}

\subsection{Shape monotonicity of the moment}

To compare the two sources as $\lambda$ varies, we first need monotonicity of
the normalized moment itself.  The same argument also upgrades the fixed-shape
Marchenko--Pastur limit to a uniform limit on bounded shape intervals.

\begin{lemma}[Shape monotonicity and uniform Marchenko--Pastur limit]
\label{lem:shape-uniform}
Fix $s>0$.  For every $N\ge1$, the map
$\lambda\mapsto C_{s,\lambda}(N)$ is strictly increasing on $[0,\infty)$.
Consequently, for every finite $\Lambda\ge0$, we have
\begin{equation}\label{eq:uniform-MP}
 \sup_{0\le\lambda\le\Lambda}
 \bigl|C_{s,\lambda}(N)-m_s\bigr|\longrightarrow0.
\end{equation}
In particular, $C_{s,\lambda}(N)$ is bounded uniformly in
$N\ge1$ and $0\le\lambda\le\Lambda$.
\end{lemma}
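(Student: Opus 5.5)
The plan follows the proof strategy announced in the introduction: positive association of the ordered LUE eigenvalues. The shape parameter enters the joint density only through the factor $\prod_i x_i^\lambda$. Hence, for $\lambda<\lambda'$, the law at shape $\lambda'$ is the law at shape $\lambda$ tilted by the likelihood ratio
\[
 \prod_{i=1}^N x_i^{\lambda'-\lambda}.
\]
On the ordered cone $0<x_1<\cdots<x_N$ this ratio is a coordinatewise increasing function. Likewise, $\operatorname{Tr}W^s=\sum_i x_i^s$ is coordinatewise increasing.

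The key step is to show that the ordered eigenvalue law at shape $\lambda$ is positively associated, so that any two increasing functions are positively correlated. Applying this to the two increasing functions above yields
\[
 \E_{\lambda'}[\operatorname{Tr}W^s]
 =\frac{\E_\lambda\bigl[\operatorname{Tr}W^s\prod x_i^{\lambda'-\lambda}\bigr]}{\E_\lambda\bigl[\prod x_i^{\lambda'-\lambda}\bigr]}
 \ge \E_\lambda[\operatorname{Tr}W^s],
\]
and therefore monotonicity of $C_{s,\lambda}(N)$ in $\lambda$.

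To obtain association, I would verify the FKG lattice condition (multivariate total positivity, $\mathrm{MTP}_2$) for the density restricted to the ordered cone. The cone is a sublattice under coordinatewise max and min. The one-body factors $x_i^\lambda\e^{-x_i}$ are products and so are harmless. The pair factors $(x_j-x_i)^2$ with $i<j$ are $\mathrm{MTP}_2$ in $(x_i,x_j)$ on the cone, because $\log(x_j-x_i)$ has a nonnegative mixed partial derivative. Indeed,
\[
 \partial_{x_i}\partial_{x_j}\log(x_j-x_i)=\frac{1}{(x_j-x_i)^2}>0.
\]
Since products of $\mathrm{MTP}_2$ functions are $\mathrm{MTP}_2$, the FKG inequality applies. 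Strictness follows because both functions are nonconstant and strictly increasing in $x_N$, say. The covariance is then strictly positive: a density that is positive on an open set together with strictly increasing functions rules out equality. I expect this strictness, and the technical check that FKG holds on the (open) cone, to be the main obstacle. If a direct strict FKG argument is awkward, a fallback is to differentiate in $\lambda$. The derivative equals $\operatorname{Cov}_\lambda\bigl(\sum x_i^s,\sum\log x_i\bigr)$, and strictness can be obtained from a Harris-type identity or a conditioning argument on $x_N$.

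The uniform limit then follows by a Dini/sandwich argument. For $0\le\lambda\le\Lambda$, monotonicity gives
\[
 C_{s,0}(N)\le C_{s,\lambda}(N)\le C_{s,\Lambda}(N).
\]
Both endpoints converge to $m_s$ by the fixed-shape limit \eqref{eq:MP-limit}. Hence the supremum in \eqref{eq:uniform-MP} is at most
\[
 \max\bigl(|C_{s,0}(N)-m_s|,\ |C_{s,\Lambda}(N)-m_s|\bigr)\to0.
\]
For uniform boundedness over all $N\ge1$, note that each $C_{s,\Lambda}(N)$ is finite and that the sequence converges. It is therefore bounded, and the lower bound $0$ is trivial.
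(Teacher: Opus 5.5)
Your architecture matches the paper's. You use $\mathrm{MTP}_2$ of the density extended by zero off the ordered chamber; your mixed-partial check of $(x_j-x_i)^2$ is valid, since in the only nontrivial configuration $x_i\le y_i<y_j\le x_j$ the rectangle $[x_i,y_i]\times[y_j,x_j]$ lies in $\{u<v\}$. You then apply Karlin--Rinott association, with truncation of the unbounded functions. Finally, the sandwich $C_{s,0}(N)\le C_{s,\lambda}(N)\le C_{s,\Lambda}(N)$ gives the uniform limit and the uniform bound. The finite tilt by $\prod_i x_i^{\lambda'-\lambda}$ is a legitimate way to get \emph{weak} monotonicity, and it spares you justifying differentiation under the integral.

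The gap is strictness, which is part of the statement. The paper later also uses the stronger pointwise fact $\partial_\lambda C_{s,\lambda}(N)>0$ in the relative-derivative lemma. Association gives only $\operatorname{Cov}\ge0$. Your sentence that a positive density and strictly increasing functions ``rule out equality'' is an assertion, not a consequence of the FKG inequality you invoked. It is also awkward to repair on your primary route: the likelihood ratio $\prod_i x_i^{\lambda'-\lambda}$ is multiplicative, so the covariance does not split into one-coordinate pieces.

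The paper takes your fallback and closes it concretely. After differentiating, both $H_s=\sum_iY_i^s$ and $\ell=\sum_j\log Y_j$ are additive, so
\[
 \operatorname{Cov}_\lambda(H_s,\ell)=\sum_{i,j=1}^N\operatorname{Cov}_\lambda(Y_i^s,\log Y_j).
\]
Every term is nonnegative by association. Each diagonal term is strictly positive by the one-variable identity $2\operatorname{Cov}_\lambda(Y_i^s,\log Y_i)=\E[(Y_i^s-(Y_i')^s)(\log Y_i-\log Y_i')]$, with $Y_i'$ an independent copy, because both scalar functions are strictly increasing and $Y_i$ is nondegenerate. This additive decomposition plus the diagonal identity is the missing idea. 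A conditioning argument could also work, but it needs further $\mathrm{MTP}_2$ facts you have not stated: conditional laws remain associated, and conditional expectations of increasing functions are increasing in the conditioning variable.
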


\begin{proof}
\smallskip\noindent\emph{Differentiating in the shape.}
Fix $\lambda\ge0$ and let
\[
 \mathcal W_N
 :=\{(x_1,\ldots,x_N)\in(0,\infty)^N:
        x_1<\cdots<x_N\}.
\]
Write the ordered LUE density as
\[
 f_{N,\lambda}(x)
 =\frac{1}{Z_N(\lambda)}
   \prod_{i=1}^N x_i^\lambda \e^{-x_i}
   \prod_{1\le i<j\le N}(x_j-x_i)^2,
 \qquad x\in\mathcal W_N.
\]
Set
\[
 H_s(x):=\sum_{i=1}^N x_i^s,
 \qquad
 \ell(x):=\sum_{i=1}^N\log x_i,
\]
and let $Y$ have density $f_{N,\lambda}$.  In this proof, $\E_\lambda$
and $\operatorname{Cov}_\lambda$ refer to expectation and covariance with
respect to this law.  The parameter $\lambda$ enters through the exponential
tilt $\exp(\lambda\ell(x))$.  Differentiating the normalized integral gives
\begin{align}
 \partial_\lambda C_{s,\lambda}(N)
 &=N^{-s-1}
   \left(
     \E_\lambda[H_s(Y)\ell(Y)]
     -\E_\lambda H_s(Y)\,\E_\lambda \ell(Y)
   \right) \notag\\
 &=N^{-s-1}\operatorname{Cov}_\lambda
   \bigl(H_s(Y),\ell(Y)\bigr).
 \label{eq:shape-cov}
\end{align}
On compact shape intervals this differentiation is justified by the usual
hard-edge integrability of the logarithmic factors and the exponential decay
at infinity.

\smallskip\noindent\emph{Positivity of the covariance.}
We use association of the ordered eigenvalues.  The chamber $\mathcal W_N$ is
a sublattice, and the one-variable factors $x_i^\lambda\e^{-x_i}$ are
modular.  For a Vandermonde factor $h(u,v)=(v-u)^2$, the only nontrivial
ordering is, after interchanging the two points if necessary,
$x_i\le y_i<y_j\le x_j$.  In that case
\[
 (y_j-x_i)(x_j-y_i)-(x_j-x_i)(y_j-y_i)
 =(y_i-x_i)(x_j-y_j)\ge0.
\]
All factors are nonnegative, so squaring proves the MTP$_2$ inequality for
$h$.  After extending the density by zero outside $\mathcal W_N$, the full
density is therefore MTP$_2$.  Karlin--Rinott association
\cite{KarlinRinott}, with truncation for the unbounded functions, gives
\[
 \operatorname{Cov}_\lambda(Y_i^s,\log Y_j)\ge0
 \qquad(1\le i,j\le N).
\]
Strictness comes from the diagonal terms.  If $Y_i'$ is an independent copy
of the nondegenerate coordinate $Y_i$, then
\[
 2\operatorname{Cov}_\lambda(Y_i^s,\log Y_i)
 =\E\!\left[(Y_i^s-(Y_i')^s)(\log Y_i-\log Y_i')\right]>0,
\]
because both scalar functions are strictly increasing.  Hence
\[
 \operatorname{Cov}_\lambda(H_s(Y),\ell(Y))
 =\sum_{i,j=1}^N
 \operatorname{Cov}_\lambda(Y_i^s,\log Y_j)>0,
\]
and \eqref{eq:shape-cov} proves strict shape monotonicity.

\smallskip\noindent\emph{Uniformity on bounded shape intervals.}
Let $\Lambda<\infty$.  The monotonicity just proved gives
\[
 C_{s,0}(N)\le C_{s,\lambda}(N)\le C_{s,\Lambda}(N),
 \qquad 0\le\lambda\le\Lambda.
\]
Both endpoint sequences converge to $m_s$ by \eqref{eq:MP-limit}.  Therefore
\[
 \sup_{0\le\lambda\le\Lambda}
 \bigl|C_{s,\lambda}(N)-m_s\bigr|
 \le
 \max\left\{
   |C_{s,0}(N)-m_s|,
   |C_{s,\Lambda}(N)-m_s|
 \right\}
 \longrightarrow0.
\]
This is \eqref{eq:uniform-MP}.  It also gives the stated uniform boundedness
for all sufficiently large $N$; the finitely many remaining dimensions are
bounded on $[0,\Lambda]$ by continuity.
\end{proof}

\subsection{A relative shape-derivative bound}

Moment monotonicity alone is enough when the square source is nonnegative.  In
the convex range $1<s<2$, however, some scalar brackets below are negative;
then the increase of $C_{s,\lambda}(N)$ itself has to be controlled.  The next
lemma compares its relative shape derivative with the one-dimensional moment.

\begin{lemma}[Relative shape derivative]
For $s>0$ put
\[
 q_{s,\lambda}:=Q_{s,\lambda}(1)
 =\frac{\Gamma(\lambda+s+1)}{\Gamma(\lambda+1)},
 \qquad
 \delta_s(\lambda):=\partial_\lambda\log q_{s,\lambda}.
\]
Then, for every $N\ge1$ and $\lambda\ge0$,
\begin{equation}\label{eq:relative-shape-derivative}
 0<
 \frac{\partial_\lambda C_{s,\lambda}(N)}{C_{s,\lambda}(N)}
 \le \delta_s(\lambda).
\end{equation}
The upper inequality is strict for $N\ge2$.  If $s>1$, then
\begin{equation}\label{eq:delta-bound}
 \delta_s(\lambda)<\frac{s}{\lambda+1}.
\end{equation}
\end{lemma}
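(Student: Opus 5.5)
The plan is to normalize the recurrence \eqref{eq:Q-recurrence} by its one-dimensional solution and show that the normalized quantity is nonincreasing in $\lambda$; the lower bound is already available. The lower inequality in \eqref{eq:relative-shape-derivative} is exactly Lemma~\ref{lem:shape-uniform}: there $\partial_\lambda C_{s,\lambda}(N)>0$ by \eqref{eq:shape-cov}, and $C_{s,\lambda}(N)>0$. For the upper inequality, set
\[
 P_N(\lambda):=\frac{Q_{s,\lambda}(N)}{q_{s,\lambda}},\qquad
 a_N(\lambda):=2+\frac{s(s+1)}{N(N+\lambda)}.
\]
Then $P_0=0$, $P_1=1$ and $P_{N+1}=a_NP_N-P_{N-1}$, with $a_N>2$ and $a_N'(\lambda)=\partial_\lambda a_N<0$. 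Since $\partial_\lambda\log C_{s,\lambda}(N)=\partial_\lambda\log Q_{s,\lambda}(N)=\delta_s(\lambda)+\partial_\lambda\log P_N$, it suffices to show $\partial_\lambda P_N\le0$, with strict inequality for $N\ge2$.

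\textbf{Step 1: positivity and growth of $P_N$.} By induction, $P_N>0$ and $P_{N+1}-P_N>0$ for $N\ge0$. The inductive step uses
\[
 P_{N+1}-P_N=(a_N-2)P_N+(P_N-P_{N-1}),
\]
where both terms are nonnegative, and the second is positive.

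\textbf{Step 2: sign of the derivative.} Differentiating the recurrence gives, for $D_N:=\partial_\lambda P_N$,
\[
 D_0=D_1=0,\qquad D_{N+1}=a_ND_N-D_{N-1}+a_N'P_N .
\]
I will prove by simultaneous induction that $D_N\le0$ and $D_{N+1}-D_N\le0$, which is a discrete concavity-type argument. The key identity is
\[
 D_{N+1}-D_N=(D_N-D_{N-1})+(a_N-2)D_N+a_N'P_N .
\]
Here the first term is $\le0$ by hypothesis, the second is $\le0$ because $a_N>2$ and $D_N\le0$, and the third is $<0$ because $a_N'<0$ and $P_N>0$. Hence every difference $D_{N+1}-D_N$ is strictly negative. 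In particular $D_2=a_1'P_1<0$, and then $D_N<0$ for all $N\ge2$. This gives the strict upper bound for $N\ge2$. For $N=1$ the bound is an equality, since $P_1\equiv1$.

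\textbf{Step 3: the digamma bound.} Write $x=\lambda+1$, so that $\delta_s(\lambda)=\psi(x+s)-\psi(x)$. Split this as
\[
 \psi(x+s)-\psi(x)=\bigl[\psi(x+1)-\psi(x)\bigr]+\bigl[\psi(x+s)-\psi(x+1)\bigr].
\]
The first bracket equals $1/x$. For $s>1$, since $\psi'$ is strictly decreasing, the second bracket is at most $(s-1)\psi'(x+1)$. Moreover
\[
 \psi'(x+1)=\sum_{k\ge1}(x+k)^{-2}<\int_0^\infty(x+t)^{-2}\dd t=\frac1x .
\]
Combining the two brackets gives $\delta_s(\lambda)<s/(\lambda+1)$, which is \eqref{eq:delta-bound}.

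The main obstacle is choosing the normalization in Step 2 correctly. One has to divide by $q_{s,\lambda}$ so that all the $\lambda$-dependence of the initial data disappears ($P_0$ and $P_1$ are constant), and then run the induction on the differences $D_{N+1}-D_N$ rather than on $D_N$ alone. The positivity of $P_N-P_{N-1}$ from Step 1 is what makes that induction close. Justifying the differentiation in $\lambda$ is routine, since the coefficients $a_N$ are smooth in $\lambda$ and $q_{s,\lambda}$ is an explicit Gamma ratio.
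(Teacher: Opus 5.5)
Your proof is correct and, for the main inequality, follows the paper's argument essentially verbatim. It uses the same normalization by $q_{s,\lambda}$, the same positivity of the increments $P_N-P_{N-1}$, and the same simultaneous induction on $\partial_\lambda P_N$ and $\partial_\lambda(P_N-P_{N-1})$ driven by $a_N'P_N<0$.

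The only divergence is the bound $\delta_s(\lambda)<s/(\lambda+1)$. The paper uses Gauss's integral $\delta_s(\lambda)=\int_0^\infty \e^{-(\lambda+1)t}\frac{1-\e^{-st}}{1-\e^{-t}}\dd t$ together with the pointwise convexity inequality $1-\e^{-st}<s(1-\e^{-t})$. Your split instead uses $\psi(x+1)-\psi(x)=1/x$ and $\psi(x+s)-\psi(x+1)\le(s-1)\psi'(x+1)<(s-1)/x$, which is an equally elementary and valid alternative.
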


\begin{proof}
\smallskip\noindent\emph{The one-dimensional normalization.}
We divide by $q_{s,\lambda}$ because its shape dependence is explicit, leaving
a recurrence factor whose dependence on $\lambda$ has a definite sign.  Set
\[
 U_N(\lambda):=\frac{Q_{s,\lambda}(N)}{q_{s,\lambda}},
 \qquad
 v_n(\lambda):=\frac{s(s+1)}{n(n+\lambda)}.
\]
Then $U_0=0$, $U_1=1$, and the recurrence becomes
\[
 U_{n+1}=(2+v_n)U_n-U_{n-1}.
\]
For $\Delta U_n:=U_n-U_{n-1}$ this is
\[
 \Delta U_1=1,\qquad \Delta U_{n+1}=\Delta U_n+v_nU_n.
\]
Thus $\Delta U_n>0$ and $U_n>0$.  Since $v_n'(\lambda)<0$,
\[
 (\Delta U_{n+1})'=(\Delta U_n)'+v_n'U_n+v_nU_n'.
\]
Starting from $(\Delta U_1)'=U_1'=0$, induction gives
$(\Delta U_n)'\le0$ and $U_n'\le0$; moreover $U_n'<0$ for $n\ge2$.  Indeed,
once the two derivatives are nonpositive at step $n$, both additional terms
on the right are nonpositive at step $n+1$, and $v_n'U_n<0$ gives strictness.
Since
\[
 C_{s,\lambda}(N)
 =N^{-s-1}q_{s,\lambda}U_N(\lambda),
\]
we obtain
\[
 \partial_\lambda\log C_{s,\lambda}(N)
 =\delta_s(\lambda)+\partial_\lambda\log U_N(\lambda)
 \le\delta_s(\lambda).
\]
The strict lower bound was proved in Lemma~\ref{lem:shape-uniform}, and the
upper bound is strict for $N\ge2$ because then $U_N'<0$.

\smallskip\noindent\emph{Bounding the one-dimensional derivative.}
For $a=\lambda+1$, the logarithmic derivative of the Gamma ratio is
\[
 \delta_s(\lambda)
 =\int_0^\infty
 \e^{-at}\frac{1-\e^{-st}}{1-\e^{-t}}\,\dd t.
\]
If $s>1$, strict convexity of $y\mapsto y^s$ gives
$1-\e^{-st}<s(1-\e^{-t})$ for $t>0$, and therefore
\[
 \delta_s(\lambda)
 <s\int_0^\infty \e^{-(\lambda+1)t}\,\dd t
 =\frac{s}{\lambda+1}.
\] This completes the proof.
\end{proof}

Before differentiating the decrement, we record two consequences that will
be used in the convex range.  If $s\ge1$, the power-mean inequality followed
by Jensen's inequality and the exact first moment
$Q_{1,\lambda}(N)=N(N+\lambda)$ (also obtained directly from
\eqref{eq:Q-recurrence} at $s=1$) give
\[
 C_{s,\lambda}(N)\ge\left(1+\frac{\lambda}{N}\right)^s\ge1.
\]
Further, for $k\ge1$ set
\[
 G_{k,s}(\lambda)
 :=k^2F_s(1/k)+\frac{s(s+1)\lambda}{k+\lambda}.
\]
With the same notation, the exact decomposition takes the form
\begin{equation}\label{eq:global-source-bracket}
 \Gamma_{N,s,\lambda}
 =\frac{1}{[N(N+1)]^{s+1}}
 \sum_{k=1}^Nk^{2s}C_{s,\lambda}(k)G_{k,s}(\lambda).
\end{equation}
This form isolates the only difficulty in differentiating the decrement.  If
$G_{k,s}(\lambda)\ge0$, both factors in a summand vary in the favorable
direction.  If $G_{k,s}(\lambda)<0$, the increase of
$C_{s,\lambda}(k)$ contributes with the wrong sign, and the relative derivative
bound above is needed to compare it with $G_{k,s}'(\lambda)>0$.

\subsection{Shape monotonicity of the decrement}

\begin{theorem}[Strict shape monotonicity of the decrement]\label{thm:global-shape-monotonicity}
For every $s>0$ and $N\ge1$, the map
$\lambda\mapsto\Gamma_{N,s,\lambda}$ is strictly increasing on
$[0,\infty)$.  If $1<s<2$, then, more quantitatively,
\begin{align}
 \partial_\lambda\Gamma_{N,s,\lambda}
 &>
 \frac{s(s-1)}
 {(\lambda+1)[N(N+1)]^{s+1}}
 \sum_{k=1}^N
 \frac{k^{2s}}{(k+\lambda)^2}C_{s,\lambda}(k)
 \label{eq:convex-shape-reserve}\\
 &\ge
 \frac{s(s-1)}
 {(\lambda+1)[N(N+1)]^{s+1}}
 \sum_{k=1}^N\frac{k^{2s}}{(k+\lambda)^2}.
 \label{eq:convex-shape-reserve-explicit}
\end{align}
\end{theorem}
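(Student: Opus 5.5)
I will differentiate the bracket form \eqref{eq:global-source-bracket} term by term. The prefactor $[N(N+1)]^{-s-1}$ and the weights $k^{2s}$ do not depend on $\lambda$. Moreover $G_{k,s}'(\lambda)=s(s+1)k/(k+\lambda)^2>0$. Hence
\[
 \partial_\lambda\Gamma_{N,s,\lambda}
 =\frac{1}{[N(N+1)]^{s+1}}\sum_{k=1}^N k^{2s}\Bigl(
 C_{s,\lambda}(k)\,\frac{s(s+1)k}{(k+\lambda)^2}
 +\partial_\lambda C_{s,\lambda}(k)\,G_{k,s}(\lambda)\Bigr).
\]

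\emph{Case $0<s\le1$ or $s\ge2$.} Lemma~\ref{lem:F-sign} gives $F_s\ge0$, so $G_{k,s}(\lambda)\ge0$. Lemma~\ref{lem:shape-uniform} gives $\partial_\lambda C_{s,\lambda}(k)>0$. Both parts of every summand are then nonnegative, and the first part is strictly positive. Strict monotonicity follows, including at $s\in\{1,2\}$.

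\emph{Case $1<s<2$.} Here $F_s<0$. Any summand with $G_{k,s}(\lambda)\ge0$ is treated as in the previous case. For a summand with $G_{k,s}(\lambda)<0$, I use \eqref{eq:relative-shape-derivative} in the form $0<\partial_\lambda C\le\delta_s(\lambda)C$. Together with $G<0$, this gives
\[
 \partial_\lambda C_{s,\lambda}(k)\,G_{k,s}(\lambda)\ge \delta_s(\lambda)\,C_{s,\lambda}(k)\,G_{k,s}(\lambda).
\]
The key is a lower bound on $G$. Since $(1\pm x)^{s+1}$ contributes with $F_s$, I will use $F_s(x)\ge -s(s+1)x^2+\bigl[(1+x)^{s+1}+(1-x)^{s+1}-2\bigr]\ge -s(s+1)x^2$. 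This gives $k^2F_s(1/k)\ge -s(s+1)$, and therefore
\[
 G_{k,s}(\lambda)\ge -s(s+1)+\frac{s(s+1)\lambda}{k+\lambda}=-\frac{s(s+1)k}{k+\lambda}.
\]
Combining this with \eqref{eq:delta-bound}, $\delta_s<s/(\lambda+1)$, the negative summands contribute at least
\[
 -\frac{s}{\lambda+1}\cdot\frac{s(s+1)k}{k+\lambda}\,C_{s,\lambda}(k).
\]
That is not yet comparable with $\frac{s(s+1)k}{(k+\lambda)^2}C$, because it lacks a factor $(k+\lambda)^{-1}$. So the crude bound $G\ge -s(s+1)k/(k+\lambda)$ is too weak, and this is the main obstacle.

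To fix it I will use the sharper bound coming from the integral representation \eqref{eq:F-integral}, with $|\eta_s|$ small. Equivalently, I will aim to show
\[
 k^2|F_s(1/k)|\le \frac{s(s+1)}{k}\cdot c
\]
for a suitable constant $c$, valid for all $k\ge1$. Then I will check the resulting scalar inequality
\[
 \frac{s(s+1)k}{(k+\lambda)^2}-\frac{s}{\lambda+1}\,\bigl|G_{k,s}(\lambda)\bigr|
 \ge\frac{s(s-1)}{(\lambda+1)(k+\lambda)^2}.
\]
This is the target \eqref{eq:convex-shape-reserve} summand by summand. It reduces to bounding $(k+\lambda)^2\bigl(k^2|F_s(1/k)|\bigr)$ when $G<0$. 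But $G<0$ forces $\lambda/(k+\lambda)<k^2|F_s(1/k)|/(s(s+1))$, which is small, so $\lambda$ is at most of order $k$ times a small quantity. I expect that checking this elementary inequality uniformly in $k\ge1$ and $1<s<2$ — via the series \eqref{eq:F-series}, or directly at $k=1$ where $F_s(1)=2^{s+1}-2-s(s+1)$ — is the computational core of the proof.

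Strictness in \eqref{eq:convex-shape-reserve} comes from the strict inequality in \eqref{eq:delta-bound}. Finally, \eqref{eq:convex-shape-reserve-explicit} follows from $C_{s,\lambda}(k)\ge(1+\lambda/k)^s\ge1$ for $s\ge1$.
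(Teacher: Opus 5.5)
Your reduction is the paper's: you differentiate \eqref{eq:global-source-bracket}, settle $0<s\le1$ and $s\ge2$ by signs, and for a summand with $G_{k,s}(\lambda)<0$ you combine \eqref{eq:relative-shape-derivative} with $\delta_s(\lambda)<s/(\lambda+1)$, which is also where strictness comes from. The gap is that you stop where the proof has its content. The per-summand inequality is called ``the computational core'' but is never verified, and you have not settled which bound on $F_s$ it needs or proved one.

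Clearing denominators, your target for a negative bracket is
\[
 P_{k,s}(\lambda):=(s+1)k(\lambda+1)+s(s+1)\lambda(k+\lambda)+(k+\lambda)^2k^2F_s(1/k)\ \ge\ s-1 .
\]
This is a quadratic in $\lambda$. The only input needed is the uniform estimate $-F_s(x)<2x^4$ on $(0,1]$. The paper gets it from \eqref{eq:F-series}: for $1<s<2$ every coefficient $\binom{s+1}{2j}$ with $j\ge2$ is negative, since it has $2j-3$ negative factors. Hence $-F_s(x)/x^4$ is increasing and $-F_s(x)\le x^4\bigl(-F_s(1)\bigr)$. Moreover $-F_s(1)=2+s(s+1)-2^{s+1}<2$, because $s(s+1)<2^{s+1}$ on $(1,2)$.

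Your integral route also works, but only after a further step. Substitute $t=xu$ and use $(1-xu)^{s-3}\le(1-u)^{s-3}$; this gives $-F_s(x)\le 2|\eta_s|\bigl(\tfrac14+\tfrac1{s+1}\bigr)x^4$ uniformly up to $x=1$. The expansion \eqref{eq:F-leading} alone controls only large $k$, not the binding case $k=1$.

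Inserting $k^4F_s(1/k)>-2$ coefficientwise, $P_{k,s}(\lambda)$ has coefficients $(s+1)k-2$, $(s+1)^2k-4/k$ and $s(s+1)-2/k^2$. All three are positive for $k\ge1$ and $1<s<2$. So $P_{k,s}(\lambda)>(s+1)k-2\ge s-1$ for every $\lambda\ge0$. In particular, you do not need the constraint $G_{k,s}(\lambda)<0$ to confine $\lambda$, as you had planned. For summands with $G_{k,s}(\lambda)\ge0$, state the check $(s+1)k(\lambda+1)>s-1$, which gives $G_{k,s}'(\lambda)>s(s-1)/\bigl((\lambda+1)(k+\lambda)^2\bigr)$. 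With these points the argument is complete.
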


\begin{proof}
\smallskip\noindent\emph{The sign-positive ranges.}
Since
\[
 G_{k,s}'(\lambda)=\frac{s(s+1)k}{(k+\lambda)^2}>0,
\]
if $0<s\le1$ or $s\ge2$, Lemmas~\ref{lem:F-sign} and
\ref{lem:shape-uniform} show directly that every summand in
\eqref{eq:global-source-bracket} has strictly positive shape derivative.

\smallskip\noindent\emph{The convex range.}
Now let $1<s<2$.  Here $F_s<0$, and a summand with
$G_{k,s}(\lambda)<0$ is the only possible obstruction: the factor
$C_{s,\lambda}(k)$ increases with $\lambda$, but it multiplies a negative
bracket.  A simple uniform estimate for $F_s$ will be enough to control this
case:
\begin{equation}\label{eq:F-convex-bound}
 0<-F_s(x)<2x^4,\qquad 0<x\le1.
\end{equation}
Indeed, every coefficient in \eqref{eq:F-series} is negative in this
range, so
\[
 -F_s(x)\le x^4[-F_s(1)].
\]
Moreover,
\[
 -F_s(1)=2+s(s+1)-2^{s+1}<2.
\]
For the last inequality, observe that $s+1<2^s$ on $(1,2)$: the function $f(s)=2^s-s-1$ satisfies $f(1)=0$, $f'(1)=2\log2-1>0$, and $f''>0$. Multiplying by $s<2$ gives $s(s+1)<2^{s+1}$.

Now fix $k$.  If $G_{k,s}(\lambda)\ge0$, then
\[
 \partial_\lambda\bigl(C_{s,\lambda}(k)G_{k,s}(\lambda)\bigr)
 \ge C_{s,\lambda}(k)G_{k,s}'(\lambda)>0.
\]
It remains to consider $G_{k,s}(\lambda)<0$.  In this case
\eqref{eq:relative-shape-derivative} and \eqref{eq:delta-bound} give
\[
0<
\frac{\partial_\lambda C_{s,\lambda}(k)}
     {C_{s,\lambda}(k)}
\le \delta_s(\lambda)
<\frac{s}{\lambda+1}.
\]
Multiplication by the negative bracket reverses the last inequality, so the
derivative of the summand is bounded from below by
\begin{align*}
 \frac{\partial_\lambda
 \bigl(C_{s,\lambda}(k)G_{k,s}(\lambda)\bigr)}
 {C_{s,\lambda}(k)}
 &=
 G_{k,s}'(\lambda)
 +G_{k,s}(\lambda)
 \frac{\partial_\lambda C_{s,\lambda}(k)}
 {C_{s,\lambda}(k)}\\
 &>
 G_{k,s}'(\lambda)
 +\frac{s}{\lambda+1}G_{k,s}(\lambda)=\frac{sP_{k,s}(\lambda)}
 {(\lambda+1)(k+\lambda)^2},
\end{align*}
where the remaining question has been reduced to the sign of the quadratic
polynomial
\begin{align*}
 P_{k,s}(\lambda)
 &=
 \bigl(k^4F_s(1/k)+(s+1)k\bigr)+
 \bigl(2k^3F_s(1/k)+(s+1)^2k\bigr)\lambda\\
 &\quad+
 \bigl(k^2F_s(1/k)+s(s+1)\bigr)\lambda^2.
\end{align*}
By \eqref{eq:F-convex-bound},
$F_s(1/k)>-2/k^4$, and therefore, coefficientwise,
\[
 P_{k,s}(\lambda)
 >
 \bigl((s+1)k-2\bigr)
 +\left((s+1)^2k-\frac4k\right)\lambda
 +\left(s(s+1)-\frac2{k^2}\right)\lambda^2.
\]
All three coefficients on the right are strictly positive for
$1<s<2$ and $k\ge1$, because
$(s+1)k>2$, $(s+1)^2k^2>4$, and $s(s+1)>2$.  Thus, the coarse estimate
$F_s(x)>-2x^4$ is already strong enough to give
$P_{k,s}(\lambda)>0$ for every $\lambda\ge0$.  This settles the only case in
which the derivative of a summand did not have an immediate sign.

\smallskip\noindent\emph{The quantitative bound.}
The same estimates give the stated lower bound for the derivative.  In the case
$G_{k,s}(\lambda)<0$ we have
$P_{k,s}(\lambda)>-2+(s+1)k\ge s-1$, while if
$G_{k,s}(\lambda)\ge0$, then
\[
 G_{k,s}'(\lambda)
 >\frac{s(s-1)}{(\lambda+1)(k+\lambda)^2}.
\]
Thus, in both cases,
\[
 \partial_\lambda
 \bigl(C_{s,\lambda}(k)G_{k,s}(\lambda)\bigr)
 >
 \frac{s(s-1)}{(\lambda+1)(k+\lambda)^2}
 C_{s,\lambda}(k).
\]
Substitution in \eqref{eq:global-source-bracket} proves
\eqref{eq:convex-shape-reserve}.  Finally, the convex-moment lower bound recorded above gives
$C_{s,\lambda}(k)\ge1$, which proves
\eqref{eq:convex-shape-reserve-explicit}.

\end{proof}

\section{The square sign diagram}\label{sec:square}

\subsection{Square shape}

At square shape, the second source in Proposition~\ref{prop:exact} vanishes.
Lemma~\ref{lem:F-sign} therefore gives a complete finite-dimensional sign
theorem for every positive moment order.

\begin{theorem}[Exact square sign diagram]\label{thm:square-sign}
For every $N\ge1$, we have
\begin{align*}
 \Gamma_{N,s,0} & > 0, \quad 0<s<1\ \text{or}\ s>2, \\
 \Gamma_{N,s,0} & < 0, \quad 1<s<2, \\ \Gamma_{N,1,0}& = \Gamma_{N,2,0}=0.
\end{align*}
Equivalently, $N\mapsto C_{s,0}(N)$ is strictly decreasing for
$0<s<1$ and $s>2$, strictly increasing for $1<s<2$, and constant at
$s=1,2$.  In particular,
\begin{align*}
 C_{s,0}(N)&>m_s, \quad 0<s<1\ \text{or}\ s>2,\\
 C_{s,0}(N)&<m_s, \quad 1<s<2,\\
 C_{1,0}(N)&=m_1=1, \\ C_{2,0}(N)&=m_2=2.
\end{align*}
\end{theorem}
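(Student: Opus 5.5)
Set $\lambda=0$ in Proposition~\ref{prop:exact}. The shape source carries the factor $\lambda$, so it vanishes, and we are left with
\[
 \Gamma_{N,s,0}=\frac{1}{[N(N+1)]^{s+1}}\sum_{k=1}^N k^{2s+2}F_s(1/k)\,C_{s,0}(k).
\]
Each $C_{s,0}(k)$ is strictly positive, since it is the normalized expectation of a trace of a positive power of a positive definite matrix. The arguments $1/k$ lie in $(0,1]$, so Lemma~\ref{lem:F-sign} gives every summand the same strict sign. That sign is positive for $0<s<1$ or $s>2$ and negative for $1<s<2$, and every summand vanishes at $s\in\{1,2\}$ because $F_1=F_2\equiv0$. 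The sum has at least one term, so its sign is strict. This proves the three sign statements for every $N\ge1$.

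Since $\Gamma_{N,s,0}=C_{s,0}(N)-C_{s,0}(N+1)$, the monotonicity statements follow at once: strictly decreasing, strictly increasing, or constant in $N$, according to the sign.

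For the comparison with $m_s$, we use the Marchenko--Pastur limit \eqref{eq:MP-limit}, $C_{s,0}(N)\to m_s$.
\begin{itemize}
\item A strictly decreasing sequence lies strictly above its limit. Indeed, for $M>N$ we have $C_{s,0}(N)>C_{s,0}(N+1)\ge C_{s,0}(M)$, and letting $M\to\infty$ gives $C_{s,0}(N)>C_{s,0}(N+1)\ge m_s$.
\item In the same way, a strictly increasing sequence lies strictly below $m_s$.
\item At $s=1,2$ the sequence is constant, so it equals $m_s$ for every $N$.
\end{itemize}

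As a consistency check, the closed form gives $m_1=4\Gamma(3/2)/(\sqrt\pi\,\Gamma(3))=1$ and $m_2=16\Gamma(5/2)/(\sqrt\pi\,\Gamma(4))=2$. These agree with $Q_{1,0}(N)=N^2$, which is the exact first moment recorded above, and with $C_{2,0}(1)=\Gamma(3)/\Gamma(1)=2$.

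There is no real obstacle here. The only points needing care are that the base case $N=1$ is covered by Proposition~\ref{prop:exact} (it is, via the convention $Q_{s,\lambda}(0)=0$), and that strict monotonicity is what upgrades the limit comparison to a strict inequality.
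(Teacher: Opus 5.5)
Your proposal is correct and follows essentially the same route as the paper. Setting $\lambda=0$ in Proposition~\ref{prop:exact}, combining the strict sign of $F_s(1/k)$ from Lemma~\ref{lem:F-sign} with $C_{s,0}(k)>0$, and passing to the Marchenko--Pastur limit of the resulting strictly monotone (or constant) sequence is exactly the paper's argument. The paper likewise notes the exact identities $Q_{1,0}(N)=N^2$ and $Q_{2,0}(N)=2N^3$ as a check at $s=1,2$.
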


\begin{proof}
Every $C_{s,0}(k)$ is positive, so the sign assertions follow from Proposition~\ref{prop:exact} and Lemma~\ref{lem:F-sign}.  The comparison
with $m_s$ follows by letting the monotone sequence converge in \eqref{eq:MP-limit}.  At $s=1$ and $s=2$ one can also use the exact identities
$Q_{1,0}(N)=N^2$ and $Q_{2,0}(N)=2N^3$.
\end{proof}

\begin{corollary}[Monotonicity on the sign-positive bands]
\label{cor:concave-all-shapes}
For every $s\in(0,1)\cup(2,\infty)$, every real $\lambda\ge0$, and
every $N\ge1$, we get
\[
 \Gamma_{N,s,\lambda}>0.
\]
Consequently $C_{s,\lambda}(N)>m_s$ and
$N\mapsto C_{s,\lambda}(N)$ decreases strictly to $m_s$.
At $s=1$ and $s=2$ the same statements hold for $\lambda>0$, while the square sequences are constant.
\end{corollary}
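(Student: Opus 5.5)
The plan is to combine the square sign diagram of Theorem~\ref{thm:square-sign} with the strict shape monotonicity of Theorem~\ref{thm:global-shape-monotonicity}. Fix $s\in(0,1)\cup(2,\infty)$ and $N\ge1$. Theorem~\ref{thm:square-sign} gives $\Gamma_{N,s,0}>0$, and Theorem~\ref{thm:global-shape-monotonicity} says $\lambda\mapsto\Gamma_{N,s,\lambda}$ is strictly increasing on $[0,\infty)$. Hence $\Gamma_{N,s,\lambda}\ge\Gamma_{N,s,0}>0$ for every $\lambda\ge0$.

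The same conclusion can also be read off directly from Proposition~\ref{prop:exact}. By Lemma~\ref{lem:F-sign}, $F_s(1/k)>0$ in this range, and $C_{s,\lambda}(k)>0$, so every term of the square source is positive. The shape source is nonnegative. I would mention this as a check that needs no derivative.

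For $s\in\{1,2\}$, $F_s\equiv0$ by Lemma~\ref{lem:F-sign}, so only the shape source survives. That source equals
\[
\frac{s(s+1)\lambda}{[N(N+1)]^{s+1}}\sum_{k=1}^N\frac{k^{2s}}{k+\lambda}C_{s,\lambda}(k),
\]
which is strictly positive for $\lambda>0$ and vanishes at $\lambda=0$. At $\lambda=0$ this recovers the constant square sequences already recorded in Theorem~\ref{thm:square-sign}.

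From $\Gamma_{N,s,\lambda}>0$ for all $N$, the sequence $N\mapsto C_{s,\lambda}(N)$ is strictly decreasing. The Marchenko--Pastur limit \eqref{eq:MP-limit} holds for each fixed $\lambda\ge0$, so the sequence converges to $m_s$. A strictly decreasing sequence lies strictly above its limit, which gives $C_{s,\lambda}(N)>m_s$.

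No step here is a real obstacle. The only point needing care is that \eqref{eq:MP-limit} is used at fixed real $\lambda$ rather than integer $\lambda$, and this is how it was stated earlier in the paper.
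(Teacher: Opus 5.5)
Your proof is correct. Its second paragraph is exactly the paper's proof: in Proposition~\ref{prop:exact} the square source is strictly positive because $F_s(1/k)>0$ (Lemma~\ref{lem:F-sign}) and $C_{s,\lambda}(k)>0$, and the shape source is nonnegative. Your treatment of $s\in\{1,2\}$ also matches the paper. Your passage from $\Gamma_{N,s,\lambda}>0$ to strict decrease and $C_{s,\lambda}(N)>m_s$ via \eqref{eq:MP-limit} is the step the paper leaves implicit, as in the proof of Theorem~\ref{thm:square-sign}.

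Your primary route combines $\Gamma_{N,s,0}>0$ from Theorem~\ref{thm:square-sign} with Theorem~\ref{thm:global-shape-monotonicity}. It is also valid, and it is not circular: that theorem is proved in Section~\ref{sec:exact} from Lemmas~\ref{lem:F-sign} and~\ref{lem:shape-uniform} alone. It is, however, strictly heavier. It needs differentiability in $\lambda$ and the MTP$_2$/association argument giving $\partial_\lambda C_{s,\lambda}(k)>0$, whereas the direct reading uses only positivity of the moments and the sign of $F_s$.

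So your ``check'' is the more economical argument and should carry the proof. The monotonicity route additionally yields the comparison $\Gamma_{N,s,\lambda}\ge\Gamma_{N,s,0}$, which the corollary does not need. Note that this comparison does not follow from the two-source formula alone, since the square source itself depends on $\lambda$ through $C_{s,\lambda}(k)$.
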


\begin{proof}
For $s\in(0,1)\cup(2,\infty)$, both sources in
Proposition~\ref{prop:exact} are nonnegative and the square source is
strictly positive.  At $s\in\{1,2\}$ the square source vanishes and the shape
source is strictly positive when $\lambda>0$.
\end{proof}

\subsection{Quantitative shape bounds}

The sign-positive ranges need no cancellation argument.  Keeping only the
shape source gives explicit bounds that will later be specialized to the half
moment.

\begin{proposition}[Quantitative shape bounds]
\label{prop:shape-derivative-reserve}
Let $s\in(0,1]\cup[2,\infty)$.  Then for every $N\ge1$ and
$\lambda\ge0$,
\begin{equation}\label{eq:general-shape-derivative-reserve}
 \partial_\lambda\Gamma_{N,s,\lambda}
 \ge
 \frac{s(s+1)m_s}{[N(N+1)]^{s+1}}
 \sum_{k=1}^N\frac{k^{2s+1}}{(k+\lambda)^2}.
\end{equation}
The inequality is strict for $s\in(0,1)\cup(2,\infty)$.  Moreover, for $s\in(0,1]\cup[2,\infty)$ and $\lambda>0$, we have
\begin{equation}\label{eq:general-positive-shape-reserve}
 \Gamma_{N,s,\lambda} >  \frac{s(s+1)m_s\lambda}{[N(N+1)]^{s+1}} \sum_{k=1}^N\frac{k^{2s}}{k+\lambda}.
\end{equation}
For $s\in(0,1)\cup(2,\infty)$, the last strict inequality also holds at $\lambda=0$, with zero right-hand side.
\end{proposition}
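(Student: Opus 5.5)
We will work from the bracket form \eqref{eq:global-source-bracket} of Proposition~\ref{prop:exact}. Differentiate it summand by summand. For $s\in(0,1]\cup[2,\infty)$, Lemma~\ref{lem:F-sign} gives $F_s(1/k)\ge0$, and hence $G_{k,s}(\lambda)\ge0$. Lemma~\ref{lem:shape-uniform} gives $\partial_\lambda C_{s,\lambda}(k)>0$. Together these yield
\[
 \partial_\lambda\bigl(C_{s,\lambda}(k)G_{k,s}(\lambda)\bigr)
 \ge C_{s,\lambda}(k)G_{k,s}'(\lambda)
 =\frac{s(s+1)k}{(k+\lambda)^2}C_{s,\lambda}(k).
\]
The inequality is strict whenever $G_{k,s}(\lambda)>0$. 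This holds for every $k$ when $s\in(0,1)\cup(2,\infty)$, because then $F_s(1/k)>0$; at $k=1$ one uses continuity of $F_s$ at $x=1$, as recorded in Lemma~\ref{lem:F-sign}. Summing with the weight $k^{2s}/[N(N+1)]^{s+1}$ reduces \eqref{eq:general-shape-derivative-reserve} to the pointwise lower bound $C_{s,\lambda}(k)\ge m_s$.

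That lower bound is the only genuine input. Corollary~\ref{cor:concave-all-shapes} already gives $C_{s,\lambda}(k)>m_s$ for $s\in(0,1)\cup(2,\infty)$ and every $\lambda\ge0$. At $s=1,2$ we use the square identities $C_{s,0}(k)=m_s$ from Theorem~\ref{thm:square-sign}. Combined with the shape monotonicity of Lemma~\ref{lem:shape-uniform}, they give $C_{s,\lambda}(k)\ge C_{s,0}(k)=m_s$, with strict inequality for $\lambda>0$. Alternatively, the explicit formulas $Q_{1,\lambda}(k)=k(k+\lambda)$ and the corresponding second moment give this directly. Hence \eqref{eq:general-shape-derivative-reserve} holds, and it is strict on the open bands because the derivative step is strict there.

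For \eqref{eq:general-positive-shape-reserve}, we drop the square source in Proposition~\ref{prop:exact}, which is nonnegative by Lemma~\ref{lem:F-sign}. In the shape source we bound $C_{s,\lambda}(k)\ge m_s$ termwise. For $\lambda>0$ this bound is strict: on the open bands by Corollary~\ref{cor:concave-all-shapes}, and at $s\in\{1,2\}$ by strict shape monotonicity above the constant square value $m_s$. Since every term $\lambda k^{2s}/(k+\lambda)$ is positive, the strictness survives the sum. At $\lambda=0$ on the open bands, the right-hand side vanishes and $\Gamma_{N,s,0}>0$ by Theorem~\ref{thm:square-sign}.

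The main point to watch is the endpoint bookkeeping at $s=1,2$. There $F_s\equiv0$, so the square source contributes nothing. Strictness of \eqref{eq:general-positive-shape-reserve} must then come entirely from $C_{s,\lambda}(k)>m_s$ for $\lambda>0$, and that is exactly why the derivative bound is only claimed non-strictly at the endpoints. Otherwise the argument is a direct termwise estimate and needs no cancellation.
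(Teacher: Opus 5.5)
Your proposal is correct and is essentially the paper's argument. Differentiating the bracket form \eqref{eq:global-source-bracket} and discarding the nonnegative term $k^{2s}G_{k,s}(\lambda)\,\partial_\lambda C_{s,\lambda}(k)$ is the same as the paper's step of dropping the $\partial_\lambda C_{s,\lambda}(k)$ contributions from both sources in Proposition~\ref{prop:exact}, followed by $C_{s,\lambda}(k)\ge m_s$, and your second bound is obtained identically. The only cosmetic difference is that you draw strictness on the open bands from $G_{k,s}(\lambda)>0$, whereas the paper draws it from $C_{s,\lambda}(k)>m_s$; either suffices.
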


\begin{proof}
\smallskip\noindent\emph{Derivative bound.}
Differentiate the exact formula in Proposition~\ref{prop:exact}.  By
Lemma~\ref{lem:shape-uniform}, $\partial_\lambda C_{s,\lambda}(k)\ge0$.
Since $F_s(1/k)\ge0$ in the stated range, the derivative of the square source
is nonnegative.  The derivative of the shape source is
\begin{align*}
 &\frac{s(s+1)}{[N(N+1)]^{s+1}}
 \sum_{k=1}^N
 \left\{
 \frac{k^{2s+1}}{(k+\lambda)^2}C_{s,\lambda}(k)
 +\frac{\lambda k^{2s}}{k+\lambda}
   \partial_\lambda C_{s,\lambda}(k)
 \right\}.
\end{align*}
Corollary~\ref{cor:concave-all-shapes} gives
$C_{s,\lambda}(k)\ge m_s$, proving
\eqref{eq:general-shape-derivative-reserve}.  The derivative is strictly positive because its first displayed summand is positive.  On the open sign-positive bands, $C_{s,\lambda}(k)>m_s$, so the derivative
bound is strict.

\smallskip\noindent\emph{Positive-shape lower bound.}
For the second estimate, drop the nonnegative square source from
Proposition~\ref{prop:exact} and use
$C_{s,\lambda}(k)>m_s$ when $\lambda>0$.  This gives
\eqref{eq:general-positive-shape-reserve}.  On the open bands, the square source itself is strictly positive, which handles $\lambda=0$.
\end{proof}

\section{Critical shapes and the convex transition}\label{sec:shape}

\subsection{Asymptotics of the two sources}

The critical shape scale is determined by comparing the size of the square
source with the universal $\lambda N^{-2}$ contribution of the shape source.
The square source changes its summability at $s=1/2$, so its leading
coefficient has a different form below and above that value.  For
$0<s<1/2$ define
\begin{equation}\label{eq:K-def}
 \mathcal K_s:=\sum_{k=1}^\infty
 k^{2s+2}F_s(1/k)C_{s,0}(k),
\end{equation}
and for $s>1/2$ put
\begin{equation}\label{eq:b-def}
 \mathfrak b_s:=\frac{\eta_sm_s}{2s-1}
 =\frac{2\binom{s+1}{4}m_s}{2s-1}.
\end{equation}
By Lemma~\ref{lem:F-sign}, every summand in \eqref{eq:K-def} is positive;
by \eqref{eq:F-leading} it is asymptotic to
$\eta_sm_sk^{2s-2}$.  Since $2s-2<-1$ in this range, the series converges and
$\mathcal K_s>0$.  For the shape source set
\begin{equation}\label{eq:a-def}
 a_s:=\frac{s+1}{2}m_s.
\end{equation}
The two terms in Proposition~\ref{prop:exact} then have the following
asymptotic forms.

\begin{lemma}[Asymptotics of the two sources]\label{lem:source-asymptotics}
Fix $s>0$ and $\Lambda<\infty$. Then
\begin{equation}\label{eq:shape-source-asymptotic}
 \frac{s(s+1)\lambda}{[N(N+1)]^{s+1}}
 \sum_{k=1}^N\frac{k^{2s}}{k+\lambda}C_{s,\lambda}(k)
 =\frac{\lambda}{N^2}\bigl(a_s+o_\Lambda(1)\bigr)
\end{equation}
uniformly for $0\le\lambda\le\Lambda$.  For the square source, along every
sequence $\lambda_N\to0$, we have
\begin{align}
 &[N(N+1)]^{-s-1}
 \sum_{k=1}^Nk^{2s+2}F_s(1/k)C_{s,\lambda_N}(k)
 \notag\\
 &\qquad\sim \mathcal K_sN^{-2s-2}, &&0<s<1/2,
 \label{eq:source-low}\\
 &\qquad\sim \frac1{8\pi}\frac{\log N}{N^3}, &&s=1/2,
 \label{eq:source-half}\\
 &\qquad\sim \mathfrak b_sN^{-3}, &&s>1/2,\quad s\notin\{1,2\}.
 \label{eq:source-high}
\end{align}
At $s\in\{1,2\}$, the square source is zero.
\end{lemma}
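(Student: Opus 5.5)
The plan is to treat both sources as weighted Ces\`aro-type averages and use Lemma~\ref{lem:shape-uniform} to replace $C_{s,\lambda}(k)$ by $m_s$ up to errors that are negligible after averaging.

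\emph{Shape source.} Fix $\Lambda$. Write
\[
 \frac{k^{2s}}{k+\lambda}=k^{2s-1}\Bigl(1+O\bigl(\tfrac{\Lambda}{k}\bigr)\Bigr)
\]
uniformly for $0\le\lambda\le\Lambda$. Lemma~\ref{lem:shape-uniform} gives $C_{s,\lambda}(k)=m_s+\varepsilon_k$, where $\sup_{\lambda\le\Lambda}|\varepsilon_k|\to0$ and $C_{s,\lambda}(k)$ is uniformly bounded. Since $2s-1>-1$, the weights $k^{2s-1}$ are not summable, and $\sum_{k\le N}k^{2s-1}\sim N^{2s}/(2s)$. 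A standard Ces\`aro/Toeplitz argument then shows that
\[
 \sum_{k\le N}k^{2s-1}\varepsilon_k=o(N^{2s})
\]
uniformly in $\lambda$. The correction $O(\Lambda)\sum k^{2s-2}C$ is $o(N^{2s})$ as well: it is $O(N^{2s-1})$ if $s>1/2$, $O(\log N)$ if $s=1/2$, and $O(1)$ if $s<1/2$. Multiplying by $s(s+1)\lambda\,[N(N+1)]^{-s-1}=s(s+1)\lambda N^{-2s-2}(1+O(1/N))$ yields
\[
 \frac{\lambda}{N^2}\,\frac{s+1}{2}\,m_s\,(1+o(1)),
\]
which is \eqref{eq:shape-source-asymptotic}. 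The error is $o(\lambda/N^2)$, which is what the $o_\Lambda(1)$ notation means.

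\emph{Square source.} Along $\lambda_N\to0$, note first that $\lambda_N\le1$ eventually. The monotonicity sandwich
\[
 C_{s,0}(k)\le C_{s,\lambda_N}(k)\le C_{s,1}(k)
\]
gives uniform boundedness. It also gives, for each fixed $k$, that $C_{s,\lambda_N}(k)\to C_{s,0}(k)$, by continuity in $\lambda$ at fixed $k$ (these are finite integrals, differentiable by \eqref{eq:shape-cov}). Also $k^{2s+2}F_s(1/k)=\eta_s k^{2s-2}(1+O(k^{-2}))$ by \eqref{eq:F-leading}, with $F_s(1)$ finite for $k=1$.

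For $0<s<1/2$, the summands are dominated by $c\,k^{2s-2}$, which is summable. Dominated convergence gives $\sum_{k\le N}\to\mathcal K_s$, and the prefactor is $\sim N^{-2s-2}$, proving \eqref{eq:source-low}.

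For $s\ge1/2$ the weights $k^{2s-2}$ are not summable. Here I again use the Toeplitz argument, but now uniform in $\lambda\le1$: $\sup_{\lambda\le1}|C_{s,\lambda}(k)-m_s|\to0$ by Lemma~\ref{lem:shape-uniform}, so the terms are $\eta_s m_s k^{2s-2}(1+o(1))$ uniformly, and fixed-$k$ convergence is not even needed.
\begin{itemize}
\item At $s=1/2$: $\eta_{1/2}=\tfrac{1}{12}\cdot\tfrac12\cdot\tfrac32\cdot(-\tfrac12)(-\tfrac32)=\tfrac{3}{64}$ and $m_{1/2}=\tfrac{8}{3\pi}$, so $\eta_{1/2}m_{1/2}=\tfrac{1}{8\pi}$. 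The sum is $\sim\tfrac{1}{8\pi}\log N$, and with the prefactor $N^{-3}$ this gives \eqref{eq:source-half}.
\item For $s>1/2$ with $s\notin\{1,2\}$: $\eta_s\neq0$, and $\sum_{k\le N}k^{2s-2}\sim N^{2s-1}/(2s-1)$, so the total is $\sim\mathfrak b_s N^{2s-1-2s-2}=\mathfrak b_s N^{-3}$, giving \eqref{eq:source-high}.
\end{itemize}
At $s\in\{1,2\}$, $F_s\equiv0$ by Lemma~\ref{lem:F-sign}, so the square source vanishes.

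The main obstacle is ensuring uniformity in the varying shape. This is exactly why I would lean on the sandwich from Lemma~\ref{lem:shape-uniform} rather than on fixed-$\lambda$ limits. In the borderline case $s=1/2$, some care is needed to confirm that the $o(1)$ errors in the terms contribute only $o(\log N)$; the Toeplitz lemma with divergent harmonic weights handles this.
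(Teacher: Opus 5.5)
Your proposal is correct and follows essentially the same route as the paper. For the shape source and for the range $s\ge1/2$ you replace $C_{s,\lambda}(k)$ by $m_s$ uniformly via Lemma~\ref{lem:shape-uniform} and then apply power-sum (Toeplitz) asymptotics; for $0<s<1/2$ you use dominated convergence with the summable majorant $k^{2s-2}$ together with fixed-$k$ continuity in the shape. Your explicit check $\eta_{1/2}m_{1/2}=\tfrac{3}{64}\cdot\tfrac{8}{3\pi}=\tfrac{1}{8\pi}$ matches the constant in \eqref{eq:source-half}.
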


\begin{proof}
\smallskip\noindent\emph{The shape source.}
By Lemma~\ref{lem:shape-uniform}, $C_{s,\lambda}(k)\to m_s$ as
$k\to\infty$ uniformly for $0\le\lambda\le\Lambda$.  Also
$k/(k+\lambda)\to1$ uniformly on that shape interval.  Hence
\[
 \sum_{k=1}^N\frac{k^{2s}}{k+\lambda}C_{s,\lambda}(k)
 =\frac{m_s}{2s}N^{2s}+o_\Lambda(N^{2s}).
\]
Substitution gives \eqref{eq:shape-source-asymptotic}.

\smallskip\noindent\emph{The square source below the threshold.}
Let $\lambda_N\to0$ and choose $\Lambda_0<\infty$ such that $0\le\lambda_N\le\Lambda_0$ for all sufficiently large $N$.  By \eqref{eq:F-leading}, we obtain
\begin{equation}\label{eq:square-summand-expansion}
 k^{2s+2}F_s(1/k)
 =\eta_s k^{2s-2}+O_s(k^{2s-4}).
\end{equation}
Suppose first that $0<s<1/2$.  By the uniform boundedness in
Lemma~\ref{lem:shape-uniform},
\[
 \sup_{k\ge1,\ 0\le\lambda\le\Lambda_0}C_{s,\lambda}(k)<\infty.
\]
Together with \eqref{eq:square-summand-expansion}, this bounds the
absolute value of the $k$th square-source summand by
$O_{s,\Lambda_0}(k^{2s-2})$.  This majorant is summable because
$2s-2<-1$.  For every fixed $k$, continuity in the shape parameter gives
\[
 C_{s,\lambda_N}(k)\longrightarrow C_{s,0}(k).
\]
Applying dominated convergence to the summands extended by zero for
$k>N$, we obtain
\[
 \sum_{k=1}^N
 k^{2s+2}F_s(1/k)C_{s,\lambda_N}(k)
 \longrightarrow
 \sum_{k=1}^\infty
 k^{2s+2}F_s(1/k)C_{s,0}(k)
 =\mathcal K_s.
\]
Since
\[
 [N(N+1)]^{-s-1}\sim N^{-2s-2},
\]
this proves \eqref{eq:source-low}.

\smallskip\noindent\emph{The square source at and above the threshold.}
For $s\ge1/2$, Lemma~\ref{lem:shape-uniform} gives
\[
 C_{s,\lambda}(k)=m_s+o_{\Lambda_0}(1),
 \qquad k\to\infty,
\]
uniformly for $0\le\lambda\le\Lambda_0$.  Combining this with
\eqref{eq:square-summand-expansion} and the uniform boundedness from the
same lemma yields, for $s\notin\{1,2\}$,
\begin{equation}\label{eq:square-summand-uniform}
 k^{2s+2}F_s(1/k)C_{s,\lambda}(k)
 =\bigl(\eta_sm_s+o_{\Lambda_0}(1)\bigr)k^{2s-2},
\end{equation}
uniformly for $0\le\lambda\le\Lambda_0$. If $s=1/2$, the power in \eqref{eq:square-summand-uniform} is $-1$.
Therefore,
\begin{align*}
 \sum_{k=1}^N
 k^3F_{1/2}(1/k)C_{1/2,\lambda_N}(k)
 &=\eta_{1/2}m_{1/2}\log N+o(\log N)=\frac{\log N}{8\pi}+o(\log N).
\end{align*}
Since $[N(N+1)]^{-3/2}\sim N^{-3}$, this proves
\eqref{eq:source-half}.

If $s>1/2$ and $s\notin\{1,2\}$, then $2s-2>-1$, and the standard power-sum asymptotic applied to \eqref{eq:square-summand-uniform} gives
\begin{align*}
 \sum_{k=1}^N
 k^{2s+2}F_s(1/k)C_{s,\lambda_N}(k)
 &=\frac{\eta_sm_s}{2s-1}N^{2s-1}+o(N^{2s-1})\\
 &=\mathfrak b_sN^{2s-1}+o(N^{2s-1}).
\end{align*}
Multiplication by
$[N(N+1)]^{-s-1}\sim N^{-2s-2}$ proves
\eqref{eq:source-high}.  Finally, at $s\in\{1,2\}$ we have
$F_s\equiv0$, so the square source vanishes identically.
\end{proof}

\subsection{Three critical scales}

The comparison of these source orders gives all three critical scales at
once.

\begin{theorem}[Critical shape scales]\label{thm:critical-scales}
Let $\lambda_N\ge0$.
\begin{enumerate}
\item If $0<s<1/2$ and $N^{2s}\lambda_N\to\tau\in[0,\infty)$,
then
\begin{align*}
 N^{2s+2}\Gamma_{N,s,\lambda_N}&\longrightarrow \mathcal K_s+a_s\tau,
 \\
 N^{2s+1}\bigl(C_{s,\lambda_N}(N)-m_s\bigr)
 &\longrightarrow\frac{\mathcal K_s}{2s+1}+a_s\tau.
\end{align*}
\item If $s=1/2$ and
$N\lambda_N/\log N\to\tau\in[0,\infty)$, then
\begin{align*}
 \frac{N^3}{\log N}\Gamma_{N,1/2,\lambda_N}
 &\longrightarrow\frac1{8\pi}+\frac{2\tau}{\pi},
 \\
 \frac{N^2}{\log N}
 \left(C_{1/2,\lambda_N}(N)-\frac8{3\pi}\right)
 &\longrightarrow\frac1{16\pi}+\frac{2\tau}{\pi}.
\end{align*}
\item If $s>1/2$ and $N\lambda_N\to\tau\in[0,\infty)$,
then
\begin{align*}
 N^3\Gamma_{N,s,\lambda_N}&\longrightarrow \mathfrak b_s+a_s\tau,
 \\
 N^2\bigl(C_{s,\lambda_N}(N)-m_s\bigr)
 &\longrightarrow\frac{\mathfrak b_s}{2}+a_s\tau.
\end{align*}
\end{enumerate}
\end{theorem}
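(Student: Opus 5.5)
The plan is to read the three decrement limits directly off Proposition~\ref{prop:exact} combined with Lemma~\ref{lem:source-asymptotics}, and then obtain the level statements by summing the decrements from $N$ to infinity.

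\textbf{Decrement limits.} In every case $\lambda_N\to0$, so all $\lambda_N$ lie in some bounded interval $[0,\Lambda_0]$ for large $N$. The shape-source asymptotic \eqref{eq:shape-source-asymptotic} is uniform on that interval, so it applies along the sequence and gives $\tfrac{\lambda_N}{N^2}(a_s+o(1))$.

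\begin{enumerate}
\item For $0<s<1/2$, the square source is $\sim\mathcal K_sN^{-2s-2}$ by \eqref{eq:source-low}. The shape source equals $N^{-2s-2}\,(N^{2s}\lambda_N)(a_s+o(1))$, which tends to $a_s\tau N^{-2s-2}$ at that scale. This is the first limit.
\item At $s=1/2$, use \eqref{eq:source-half} together with $\lambda_N/N^2=(\log N/N^3)\cdot N\lambda_N/\log N$. Here $a_{1/2}=\tfrac34\cdot\tfrac{8}{3\pi}=\tfrac2\pi$.
\item For $s>1/2$, use \eqref{eq:source-high}, with the square source identically zero when $s\in\{1,2\}$. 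This is consistent because $\mathfrak b_s=0$ there, since $\eta_1=\eta_2=0$. In this case $\lambda_N/N^2=N^{-3}\cdot N\lambda_N$.
\end{enumerate}

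\textbf{Level limits, the main obstacle.} Since $C_{s,\lambda_N}(M)\to m_s$ as $M\to\infty$, we would like to write
\[
C_{s,\lambda_N}(N)-m_s=\sum_{M\ge N}\Gamma_{M,s,\lambda_N}.
\]
The difficulty is that the shape is frozen at $\lambda_N$ while $M$ runs to infinity, so the scaling hypothesis does not describe $M\lambda_N$ for $M\gg N$. I will therefore not use the pointwise limits from the first step. Instead I will prove uniform two-variable asymptotics for $M\ge N$:
\[
\Gamma_{M,s,\lambda_N}=\frac{\lambda_N}{M^2}\bigl(a_s+o(1)\bigr)+S_M\bigl(1+o(1)\bigr),
\]
where $S_M$ is the square-source profile, namely $\mathcal K_sM^{-2s-2}$, $\frac{1}{8\pi}\frac{\log M}{M^3}$, or $\mathfrak b_sM^{-3}$. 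The $o(1)$ terms tend to zero as $N\to\infty$, uniformly in $M\ge N$.

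This uniformity holds because both lemmas only use $\lambda\in[0,\Lambda_0]$ and the uniform limit \eqref{eq:uniform-MP}. The shape-source statement is already uniform in $\lambda$. For the square source, I will redo the proof of Lemma~\ref{lem:source-asymptotics} with $\sup_{0\le\lambda\le\epsilon}$ in place of a sequence. For $s<1/2$, dominated convergence together with the monotonicity of Lemma~\ref{lem:shape-uniform} gives $\sup_{\lambda\le\epsilon}|C_{s,\lambda}(k)-C_{s,0}(k)|\to0$ for each fixed $k$ as $\epsilon\to0$; this is Dini's theorem on the finitely many $k$ below a cutoff, and the tail is handled by summability. For $s\ge1/2$ the argument is already uniform.

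With these uniform asymptotics, the sum can be evaluated term by term:
\[
\sum_{M\ge N}\frac{\lambda_N}{M^2}\sim\frac{\lambda_N}{N},\qquad
\sum_{M\ge N}M^{-2s-2}\sim\frac{N^{-2s-1}}{2s+1},
\]
\[
\sum_{M\ge N}\frac{\log M}{M^3}\sim\frac{\log N}{2N^2},\qquad
\sum_{M\ge N}M^{-3}\sim\frac{N^{-2}}{2}.
\]
Multiplying by the respective scale factors gives the three level limits. For instance, in case (1) the shape part contributes $N^{2s+1}\lambda_N/N\to\tau$ times $a_s$, and in case (2) the square part gives $\frac{1}{16\pi}$.

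The errors are controlled because every error term is $o(1)$ times a positive summable majorant of the same order. This needs care in the convex range, where $\mathfrak b_s<0$ and the two sources can cancel. Even there, the absolute errors are bounded by $o(1)$ times $\bigl(\lambda_N/M^2+M^{-3}\bigr)$, and after summation and scaling this is $o(1)$, so cancellation between the main terms causes no problem.
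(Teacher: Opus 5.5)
Your proposal is correct and follows essentially the same route as the paper. The decrement limits are read off Lemma~\ref{lem:source-asymptotics}, and the level limits come from telescoping $\sum_{j\ge N}\Gamma_{j,s,\lambda_N}$ with the shape frozen at $\lambda_N$, using source asymptotics uniform over $j\ge N$ and summable majorants; your $\sup_{0\le\lambda\le\epsilon}$ argument for $0<s<1/2$ is just a rephrasing of the paper's uniform convergence of $\mathcal K_s(\lambda)=\sum_k k^{2s+2}F_s(1/k)C_{s,\lambda}(k)$ on bounded shape intervals.
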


\begin{proof}
\smallskip\noindent\emph{The decrement.}
For each of the three scalings, the decrement limit follows directly from
Lemma~\ref{lem:source-asymptotics}: one substitutes the prescribed size of
$\lambda_N$ and compares the square-source term with
$\lambda_N a_sN^{-2}$.

\smallskip\noindent\emph{From the decrement to the level.}
The level requires summation in the dimension index.  Fix $N$ and, while
$j$ ranges from $N$ to infinity, keep the same parameter $\lambda_N$ fixed.
Since $C_{s,\lambda_N}(j)\to m_s$,
\begin{equation}\label{eq:telescoping-moving}
 C_{s,\lambda_N}(N)-m_s
 =\sum_{j=N}^\infty\Gamma_{j,s,\lambda_N}.
\end{equation}
For the shape source, \eqref{eq:shape-source-asymptotic} gives, uniformly on
bounded shape intervals,
\[
 \lambda j^{-2}(a_s+\varepsilon_{j,\lambda}),
 \qquad
 \sup_{0\le\lambda\le\Lambda}|\varepsilon_{j,\lambda}|\longrightarrow0.
\]
Its tail is therefore
$a_s\lambda_N/N+\lambda_No(N^{-1})$.

For the square source, the form of the tail depends on the same summability
threshold as before.  If $s\ge1/2$, the proof of
Lemma~\ref{lem:source-asymptotics} gives uniformly for $j\ge N$ the leading
term $(8\pi)^{-1}(\log j)j^{-3}$ at $s=1/2$ and
$\mathfrak b_sj^{-3}$ for $s>1/2$; at $s=1,2$ the term is identically zero.
If $0<s<1/2$, define
\[
 \mathcal K_s(\lambda):=\sum_{k=1}^\infty
 k^{2s+2}F_s(1/k)C_{s,\lambda}(k).
\]
The series is uniformly convergent on bounded shape intervals by
Lemma~\ref{lem:shape-uniform} and \eqref{eq:square-summand-expansion}.  Hence
$\mathcal K_s(\lambda_N)\to\mathcal K_s$, and the square source is
$j^{-2s-2}(\mathcal K_s+o(1))$ uniformly over the tail $j\ge N$.

\smallskip\noindent\emph{Summing the asymptotics.}
The preceding estimates also provide summable majorants: a constant times
$j^{-2s-2}$ for $s<1/2$, $(1+\log j)j^{-3}$ for $s=1/2$, and $j^{-3}$ for
$s>1/2$, together with $\lambda_Nj^{-2}$ for the shape source.  We may
therefore sum the source asymptotics in \eqref{eq:telescoping-moving}.  Using
\begin{align*}
 \sum_{j=N}^\infty j^{-2s-2}&\sim\frac{N^{-2s-1}}{2s+1},\\
 \sum_{j=N}^\infty\frac{\log j}{j^3}
 &\sim\frac{\log N}{2N^2},\\
 \sum_{j=N}^\infty j^{-3}&\sim\frac1{2N^2},
 \qquad
 \sum_{j=N}^\infty j^{-2}\sim\frac1N,
\end{align*}
we obtain the three stated level limits.
\end{proof}

Taking $\lambda_N\equiv0$ in Theorem~\ref{thm:critical-scales} recovers the
three square regimes: the decrement has scale $N^{-2s-2}$ for $s<1/2$,
$(\log N)N^{-3}$ at $s=1/2$, and $N^{-3}$ for $s>1/2$; the corresponding
level scales are $N^{-2s-1}$, $(\log N)N^{-2}$, and $N^{-2}$.  The limiting
coefficients are respectively
\[
 \mathcal K_s,\quad \frac1{8\pi},\quad \mathfrak b_s
 \qquad\text{for the decrement,}
\]
and
\[
 \frac{\mathcal K_s}{2s+1},\quad \frac1{16\pi},\quad \frac{\mathfrak b_s}{2}
 \qquad\text{for the level.}
\]
At $s\in\{1,2\}$ the square source vanishes identically.

\begin{remark}[Why the half moment is logarithmic]
The number $s=1/2$ is not singled out by the recurrence itself.  It is the
point where the leading square-source summand $\eta_sm_sk^{2s-2}$ becomes
harmonic.  The logarithm in the average-singular-value problem is therefore
a summability transition in the moment order.
\end{remark}

\subsection{Finite-dimensional crossings in the convex range}

For $1<s<2$, the square and shape sources have opposite signs.  Before refining the
asymptotic transition, we record the exact finite-dimensional crossing structure.

\begin{lemma}[One-source crossing thresholds]
Let $1<s<2$ and, for $k\ge1$, define
\begin{equation}\label{eq:rho-def}
 \rho_{k,s}:=
 \frac{-k^3F_s(1/k)}
 {s(s+1)+k^2F_s(1/k)}.
\end{equation}
Then $\rho_{k,s}>0$, the sequence $k\mapsto\rho_{k,s}$ is strictly
decreasing, and
\begin{equation}\label{eq:rho-asymptotic}
 \rho_{k,s}
 =\frac{(s-1)(2-s)}{12k}+O_s(k^{-3}).
\end{equation}
\end{lemma}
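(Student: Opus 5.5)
Write $\phi_k:=-k^2F_s(1/k)$, so that $\phi_k>0$ by Lemma~\ref{lem:F-sign} (since $1<s<2$), and
\[
 \rho_{k,s}=\frac{k\phi_k}{s(s+1)-\phi_k}.
\]
Positivity needs only $\phi_k<s(s+1)$. By \eqref{eq:F-convex-bound}, $\phi_k<2/k^2\le 2<s(s+1)$, because $s(s+1)>2$ on $(1,2)$. Hence both the numerator and the denominator are positive.

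For the asymptotic \eqref{eq:rho-asymptotic}, the series \eqref{eq:F-series} gives $\phi_k=-\eta_s k^{-2}+O_s(k^{-4})$. Here $-\eta_s=s(s+1)(s-1)(2-s)/12>0$. Then
\[
 \rho_{k,s}=\frac{k\phi_k}{s(s+1)}\bigl(1+O(\phi_k)\bigr)
 =\frac{-\eta_s}{s(s+1)k}+O_s(k^{-3}),
\]
and $-\eta_s/(s(s+1))=(s-1)(2-s)/12$, as claimed.

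For strict monotonicity, the map $t\mapsto t/(s(s+1)-t)$ is increasing on $[0,s(s+1))$. It therefore suffices to show that $\psi(k):=k\phi_k$ is strictly decreasing and that $\phi_k$ is nonincreasing in $k$. Then $\rho_{k,s}=\psi(k)/(s(s+1)-\phi_k)$ has a strictly decreasing positive numerator and a nondecreasing positive denominator.

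Both properties follow from the series. Writing $c_j:=-2\binom{s+1}{2j}$, all $c_j>0$ for $1<s<2$, as noted in the proof of Theorem~\ref{thm:global-shape-monotonicity}. We get
\[
 \phi_k=\sum_{j\ge2}c_jk^{2-2j},\qquad \psi(k)=\sum_{j\ge2}c_jk^{3-2j}.
\]
These are convergent series with positive coefficients in negative powers of $k$; the exponents are $3-2j\le-1$ and $2-2j\le-2$. So both are strictly decreasing in $k\ge1$, with absolute convergence at $k=1$ from Lemma~\ref{lem:F-sign}.

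The main point to check is this coefficient sign claim, namely that $\binom{s+1}{2j}<0$ for all $j\ge2$ when $1<s<2$. I would verify it directly. The product $(s+1)s(s-1)(s-2)\cdots(s+2-2j)$ has exactly one negative factor among its first four, $(s-2)$. Every further factor $s+1-i$ with $i\ge4$ is also negative, and there are $2j-4$ of them, an even number. So the product has total sign $-1$. Everything else above is routine.
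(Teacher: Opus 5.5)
Your proof is correct and is essentially the paper's argument. Your numerator $\psi(k)=-k^3F_s(1/k)$ and denominator $s(s+1)-\phi_k$ are exactly the paper's $-F_s(x)/x^3$ and $\widetilde F_s(x)/x^2$ at $x=1/k$. As in the paper, monotonicity comes from the all-negative binomial coefficients and the asymptotic from $\eta_s$. The only difference is that you get positivity of the denominator from the bound $-F_s(x)<2x^4$ together with $s(s+1)>2$, whereas the paper gets $\widetilde F_s>0$ from strict convexity.
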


\begin{proof}
Put $r=s+1\in(2,3)$, $c_s:=s(s+1)$, and
\[
 \widetilde F_s(x):=(1+x)^r+(1-x)^r-2=F_s(x)+c_sx^2.
\]
Strict convexity gives $\widetilde F_s(x)>0$ for $0<x\le1$, whereas
Lemma~\ref{lem:F-sign} gives $F_s(x)<0$.  Thus, the denominator in
\eqref{eq:rho-def} is positive.

The binomial series in \eqref{eq:F-series} shows that
$-F_s(x)/x^3$ is strictly increasing on $(0,1]$, while
$\widetilde F_s(x)/x^2$ is positive and strictly decreasing: after division
by the indicated powers of $x$, every nonconstant coefficient has the
corresponding sign.  Consequently
\[
 \rho_s(x):=\frac{-F_s(x)}{x\widetilde F_s(x)}
 =\frac{-F_s(x)/x^3}{\widetilde F_s(x)/x^2}
\]
is strictly increasing.  Since $\rho_{k,s}=\rho_s(1/k)$, the map $k\mapsto\rho_{k,s}$ is strictly decreasing.  Finally,
\eqref{eq:F-leading} gives
\[
 \rho_s(x)=-\frac{\eta_s}{c_s}x+O_s(x^3)
 =\frac{(s-1)(2-s)}{12}x+O_s(x^3),
\]
which proves \eqref{eq:rho-asymptotic}.
\end{proof}

In particular, $\rho_{N,s}\asymp N^{-1}$, which already gives the $N^{-1}$ scale of the convex transition. The thresholds $\rho_{k,s}$ locate the sign changes of the individual scalar
brackets $G_{k,s}$.  Because they decrease with $k$, the first and last
brackets give an explicit interval for the zero of the full decrement.

\begin{proposition}[Finite-dimensional crossings]\label{prop:all-N-crossings}
Let $1<s<2$ and $N\ge1$.
\begin{enumerate}
\item The function $\lambda\mapsto C_{s,\lambda}(N)$ is strictly increasing
on $[0,\infty)$.  Hence there is a unique
$\lambda^{\mathrm{lev}}_{N,s}>0$ such that
\[
 C_{s,\lambda^{\mathrm{lev}}_{N,s}}(N)=m_s.
\]
\item If $N=1$, the decrement has its unique zero at
$\lambda=\rho_{1,s}$.  If $N\ge2$, then
\begin{align}
 \Gamma_{N,s,\lambda}&<0,
\qquad 0\le\lambda\le\rho_{N,s},\label{eq:rho-negative}\\
 \Gamma_{N,s,\lambda}&>0,
 \qquad \lambda\ge\rho_{1,s}.\label{eq:rho-positive}
\end{align}
For $N\ge2$, $\Gamma_{N,s,\lambda}$ has a unique zero
$\lambda^{\mathrm{dec}}_{N,s}\in(\rho_{N,s},\rho_{1,s})$.
\end{enumerate}
\end{proposition}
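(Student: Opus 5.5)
Part (1) follows from Lemma~\ref{lem:shape-uniform} together with Theorem~\ref{thm:square-sign}. At $\lambda=0$ the square sign diagram gives $C_{s,0}(N)<m_s$ for $1<s<2$. By Lemma~\ref{lem:shape-uniform} the map $\lambda\mapsto C_{s,\lambda}(N)$ is continuous and strictly increasing. To obtain a crossing we need $C_{s,\lambda}(N)\to\infty$ as $\lambda\to\infty$. This comes from the convex-moment lower bound $C_{s,\lambda}(N)\ge(1+\lambda/N)^s$ recorded before Theorem~\ref{thm:global-shape-monotonicity}. The intermediate value theorem then gives a zero of $C_{s,\lambda}(N)-m_s$, and strict monotonicity makes it unique. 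The zero is positive because the function is negative at $\lambda=0$.

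For part (2) I would work with the bracket form \eqref{eq:global-source-bracket}. First I locate the zero of each scalar bracket. Since $G_{k,s}(\lambda)=k^2F_s(1/k)+s(s+1)\lambda/(k+\lambda)$, the condition $G_{k,s}(\lambda)=0$ reads $\lambda\bigl(s(s+1)+k^2F_s(1/k)\bigr)=-k^3F_s(1/k)$, that is, $\lambda=\rho_{k,s}$. The bracket is strictly increasing in $\lambda$ because $G'_{k,s}>0$. Hence $G_{k,s}<0$ on $[0,\rho_{k,s})$, $G_{k,s}(\rho_{k,s})=0$, and $G_{k,s}>0$ beyond $\rho_{k,s}$.

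For $N=1$ the sum has the single term $C_{s,\lambda}(1)G_{1,s}(\lambda)$ with a positive prefactor, so its unique zero is $\rho_{1,s}$.

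For $N\ge2$ I use that $\rho_{k,s}$ is strictly decreasing in $k$, so $\rho_{N,s}<\rho_{k,s}<\rho_{1,s}$ for $1<k<N$.
\begin{itemize}
\item If $0\le\lambda\le\rho_{N,s}$, then $\lambda<\rho_{k,s}$ for every $k<N$, so those brackets are strictly negative, while $G_{N,s}(\lambda)\le0$. Since every $C_{s,\lambda}(k)>0$ and at least one term is strictly negative, this gives \eqref{eq:rho-negative}.
\item If $\lambda\ge\rho_{1,s}$, then $\lambda>\rho_{k,s}$ for all $k\ge2$, so those brackets are strictly positive, while $G_{1,s}(\lambda)\ge0$. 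This gives \eqref{eq:rho-positive}.
\end{itemize}

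Uniqueness and location of the zero then follow from Theorem~\ref{thm:global-shape-monotonicity}, which says $\lambda\mapsto\Gamma_{N,s,\lambda}$ is strictly increasing; it is also continuous. Combined with the sign information at $\rho_{N,s}$ and $\rho_{1,s}$, the intermediate value theorem gives exactly one zero, and it lies in the open interval $(\rho_{N,s},\rho_{1,s})$.

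None of these steps is hard given the earlier results; the real work was already done in Theorem~\ref{thm:global-shape-monotonicity}. The only point needing care is the boundary cases $\lambda=\rho_{N,s}$ and $\lambda=\rho_{1,s}$, where one bracket vanishes. Strictness there comes from the other brackets, which is why I require $N\ge2$, so that at least two terms are present.
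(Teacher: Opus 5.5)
Your proposal is correct and follows the paper's proof essentially step for step. Part (1) uses strict shape monotonicity, the convex-moment bound $C_{s,\lambda}(N)\ge(1+\lambda/N)^s$ and $C_{s,0}(N)<m_s$; part (2) uses the bracket form \eqref{eq:global-source-bracket}, the zeros $\rho_{k,s}$ of the increasing brackets $G_{k,s}$, their strict decrease in $k$, and uniqueness from Theorem~\ref{thm:global-shape-monotonicity}, with your endpoint strictness argument (the $k=1$ bracket at $\rho_{N,s}$, the $k=2$ bracket at $\rho_{1,s}$) spelling out what the paper states in one line.
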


\begin{proof}
\smallskip\noindent\emph{The level crossing.}
Lemma~\ref{lem:shape-uniform} gives strict increase of
$\lambda\mapsto C_{s,\lambda}(N)$, while the convex-moment bound
\[
 C_{s,\lambda}(N)
 \ge \left(1+\frac{\lambda}{N}\right)^s
\]
shows that this quantity tends to infinity with $\lambda$.  At square shape,
Theorem~\ref{thm:square-sign} gives $C_{s,0}(N)<m_s$.  Continuity therefore
gives exactly one crossing of the level $m_s$.

\smallskip\noindent\emph{The decrement crossing.}
In \eqref{eq:global-source-bracket}, the $k$th scalar bracket
$G_{k,s}(\lambda)$ is strictly increasing and vanishes at $\rho_{k,s}$.  Since
$\rho_{k,s}$ decreases with $k$, every bracket is nonpositive when
$0\le\lambda\le\rho_{N,s}$ and every bracket is nonnegative when
$\lambda\ge\rho_{1,s}$.  This gives \eqref{eq:rho-negative} and
\eqref{eq:rho-positive}; for $N\ge2$ at least one bracket is strict at each
endpoint.  When $N=1$ there is only one summand, so the zero is exactly
$\rho_{1,s}$.  For $N\ge2$, existence follows between the two endpoint signs,
and strict shape monotonicity of the decrement from
Theorem~\ref{thm:global-shape-monotonicity} gives uniqueness.
\end{proof}

\subsection{Second-order location of the crossings}

At first order the level crossing is located at half the decrement threshold.
To distinguish the finite-$N$ locations beyond that order, we expand the
decrement once more and then sum it carefully enough to retain all terms that
can contribute at order $N^{-3}$ to the level.

\begin{theorem}[Second-order convex transition]
Fix $1<s<2$ and $T<\infty$.  Then
\begin{align}
 N^3\Gamma_{N,s,\tau/N}
 &=\mathfrak b_s+a_s\tau
 -\frac{\frac32\mathfrak b_s+a_s\tau}{N}+o_T(N^{-1}),
 \label{eq:second-order-dec}\\
 N^2\bigl(C_{s,\tau/N}(N)-m_s\bigr)
 &=\frac{\mathfrak b_s}{2}+a_s\tau+o_T(N^{-1}),
 \label{eq:second-order-level}
\end{align} uniformly for $0\le\tau\le T$. In particular, the $N^{-1}$ correction vanishes in the level expansion but
not in the decrement expansion.
\end{theorem}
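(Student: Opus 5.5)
The plan is to expand both sources in Proposition~\ref{prop:exact} to relative order $N^{-1}$, treating $\lambda$ as a free parameter in a bounded interval $[0,\Lambda]$. This yields \eqref{eq:second-order-dec} after setting $\lambda=\tau/N$. The level expansion \eqref{eq:second-order-level} then comes from summing that decrement expansion over the tail $j\ge N$ with $\lambda=\tau/N$ held fixed, as in \eqref{eq:telescoping-moving}.

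\emph{Step 1: an a priori rate.} I first prove that, uniformly for $k\ge1$ and $0\le\lambda\le\Lambda$,
\[
 C_{s,\lambda}(k)-m_s=O_\Lambda\bigl(k^{-2}+\lambda k^{-1}\bigr).
\]
Write $C_{s,\lambda}(k)-m_s=\sum_{j\ge k}\Gamma_{j,s,\lambda}$ and bound each $\Gamma_j$ through Proposition~\ref{prop:exact}, using only the uniform boundedness of $C$ from Lemma~\ref{lem:shape-uniform}. Since $|k^{2s+2}F_s(1/k)|=O(k^{2s-2})$ and $2s-2>-1$, the square source is $O(j^{-3})$. The shape source is $O(\lambda j^{-2})$. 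Summing over $j\ge k$ gives the rate.

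\emph{Step 2: the decrement to second order, uniformly in $\lambda\le\Lambda$.} Expand the prefactor as $[N(N+1)]^{-s-1}=N^{-2s-2}\bigl(1-(s+1)/N+O(N^{-2})\bigr)$.

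For the square sum, insert $k^{2s+2}F_s(1/k)=\eta_sk^{2s-2}+O(k^{2s-4})$ and the rate from Step~1. Then use Euler--Maclaurin in the form
\[
 \sum_{k\le N}k^{2s-2}=\frac{N^{2s-1}}{2s-1}+\frac{N^{2s-2}}{2}+O(1).
\]
The remaining error terms are $O(N^{2s-3}+1)$ and $O(\lambda N^{2s-2})$. Relative to $N^{2s-1}$ these are $o(N^{-1})$ because $s>1$, apart from an $O(\lambda/N)$ piece. The square source is therefore
\[
 \mathfrak b_sN^{-3}\Bigl(1+\tfrac{2s-1}{2N}-\tfrac{s+1}{N}\Bigr)+o(N^{-4})+O(\lambda N^{-4})
 =\mathfrak b_sN^{-3}-\tfrac32\mathfrak b_sN^{-4}+\dots
\]

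For the shape sum, write $k^{2s}/(k+\lambda)=k^{2s-1}\bigl(1+O(\lambda/k)\bigr)$ and use the rate from Step~1 together with $\sum_{k\le N}k^{2s-1}=N^{2s}/(2s)+N^{2s-1}/2+O(N^{2s-2})$. This gives
\[
 a_s\lambda N^{-2}\Bigl(1+\tfrac sN-\tfrac{s+1}N\Bigr)+O(\lambda^2N^{-3})+O(\lambda N^{-4}).
\]
So the decrement satisfies
\[
 \Gamma_{N,s,\lambda}=\mathfrak b_sN^{-3}-\tfrac32\mathfrak b_sN^{-4}+a_s\lambda N^{-2}-a_s\lambda N^{-3}+o(N^{-4})+O(\lambda N^{-4}+\lambda^2N^{-3}),
\]
uniformly on $[0,\Lambda]$. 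Putting $\lambda=\tau/N$ with $\tau\le T$ gives \eqref{eq:second-order-dec}.

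\emph{Step 3: the level.} Apply the expansion of Step~2 at every $j\ge N$ with the same $\lambda=\tau/N\le T$ (take $\Lambda=T$), and sum. The needed tail sums are
\[
 \sum_{j\ge N}j^{-3}=\tfrac1{2N^2}+\tfrac1{2N^3}+O(N^{-4}),\qquad
 \sum_{j\ge N}j^{-4}=\tfrac1{3N^3}+O(N^{-4}),\qquad
 \sum_{j\ge N}j^{-2}=\tfrac1N+\tfrac1{2N^2}+O(N^{-3}).
\]
For the square terms, $\mathfrak b_s\bigl(\tfrac1{2N^2}+\tfrac1{2N^3}\bigr)-\tfrac32\mathfrak b_s\cdot\tfrac1{3N^3}=\tfrac{\mathfrak b_s}{2N^2}$. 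For the shape terms, $a_s\lambda\bigl(\tfrac1N+\tfrac1{2N^2}-\tfrac1{2N^2}\bigr)=a_s\lambda/N$. The $N^{-3}$ corrections cancel exactly in both cases.

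The error terms sum to $o(N^{-3})+O(\lambda N^{-3}+\lambda^2N^{-2})$, which is $o(N^{-3})$ for $\lambda=\tau/N$. Multiplying by $N^2$ gives \eqref{eq:second-order-level}.

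\emph{Main obstacle.} The delicate point is the uniform error bookkeeping. The $o(j^{-4})$ remainder in Step~2 must be uniform in $\lambda\in[0,T]$ so that it can be summed over the tail in Step~3. It must also not hide any term of order $\lambda j^{-3}$ with a nonvanishing coefficient, since such a term would contribute at order $N^{-3}$ to the level. For this reason I would carry the errors in Step~2 explicitly as $O(j^{-5}\cdot j^{3-2s}+j^{-5}\log j)$-type terms, using $s>1$, rather than as bare $o(\cdot)$ symbols.
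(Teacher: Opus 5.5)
Your proposal is correct and follows the paper's proof essentially step for step. It uses the same a priori rate $|C_{s,\lambda}(k)-m_s|=O_\Lambda(k^{-2}+\lambda k^{-1})$ from the crude source bounds, and the same Euler--Maclaurin expansion of both sources with the prefactor correction $-(s+1)/N$ producing the $-\tfrac32\mathfrak b_s$ and $-a_s\tau$ terms. It then telescopes over $j\ge N$ with $\lambda=\tau/N$ frozen, where the $N^{-3}$ terms cancel exactly. There is one cosmetic slip: after two Euler--Maclaurin terms the remainder in $\sum_{k\le N}k^{2s-2}$ is $O(1+N^{2s-3})$ rather than $O(1)$, and $\sum_{k\le N}k^{2s-4}$ carries a $\log N$ at $s=3/2$ (the paper's $\chi_s$ cases). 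Both are still $o(N^{2s-2})$, so the conclusion is unaffected.
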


\begin{proof}
\smallskip\noindent\emph{A preliminary level bound.}
We first need a uniform estimate that allows $C_{s,\lambda}(k)$ to be replaced
by $m_s$ in the second-order sums.  The exact decomposition gives
\begin{equation}\label{eq:convex-level-bound}
 |C_{s,\lambda}(k)-m_s|
 \le c_{s,\Lambda}\left(k^{-2}+\lambda k^{-1}\right)
\end{equation}
uniformly for $0\le\lambda\le\Lambda$ and $k\ge1$.  Indeed,
\eqref{eq:F-leading}, Lemma~\ref{lem:shape-uniform}, and
Proposition~\ref{prop:exact} imply
\[
 |\Gamma_{j,s,\lambda}|
 \le c_{s,\Lambda}\left(j^{-3}+\lambda j^{-2}\right),
\]
and summing from $j=k$ to infinity gives \eqref{eq:convex-level-bound}.

\smallskip\noindent\emph{The decrement.}
Put $p=2s-2\in(0,2)$ and set $\lambda=\tau/N$, with $0\le\tau\le T$.
For the square source, \eqref{eq:F-leading} and
\eqref{eq:convex-level-bound} allow us to replace the moment factor by
$m_s$ at the required precision:
\begin{align*}
 \sum_{k=1}^N k^{2s+2}F_s(1/k)C_{s,\lambda}(k)
 &=\eta_sm_s\sum_{k=1}^Nk^p+o_T(N^p)\\
 &=\mathfrak b_sN^{2s-1}+\frac{\eta_sm_s}{2}N^{2s-2}
   +o_T(N^{2s-2}).
\end{align*}
The replacement error is bounded by a constant times
\[
 \sum_{k=1}^Nk^{p-2}
 +\frac1N\sum_{k=1}^Nk^{p-1}=o(N^p),
\]
and the $O_s(k^{p-2})$ remainder in \eqref{eq:F-leading} has the same order.
Euler--Maclaurin gives the second displayed line.  Since
\[
 [N(N+1)]^{-s-1}
 =N^{-2s-2}\left(1-\frac{s+1}{N}+O_s(N^{-2})\right),
\]
the square source equals
\begin{equation}\label{eq:square-source-second-order}
 \mathfrak b_sN^{-3}-\frac{3\mathfrak b_s}{2}N^{-4}+o_T(N^{-4}).
\end{equation}
Here we used
$\eta_sm_s/2-(s+1)\mathfrak b_s=-3\mathfrak b_s/2$.

The shape source is treated in the same way.  Since $2s-1=p+1$, we get
\begin{align*}
 \sum_{k=1}^N\frac{k^{2s}}{k+\lambda}C_{s,\lambda}(k)
 &=m_s\sum_{k=1}^Nk^{2s-1}+o_T(N^{2s-1})\\
 &=\frac{m_s}{2s}N^{2s}
   +\frac{m_s}{2}N^{2s-1}+o_T(N^{2s-1}),
\end{align*}
and hence
\begin{equation}\label{eq:shape-source-second-order}
 a_s\tau N^{-3}-a_s\tau N^{-4}+o_T(N^{-4}).
\end{equation}
Adding \eqref{eq:square-source-second-order} and
\eqref{eq:shape-source-second-order} proves
\eqref{eq:second-order-dec}.

\smallskip\noindent\emph{The level.}
We now telescope the decrement over $j\ge N$.  The shape parameter remains
fixed at $\lambda=\tau/N$ while $j$ varies; it is not replaced by $\tau/j$.
Under the uniform restriction $0\le\lambda\le T/N$, the preceding calculation
gives
\begin{align*}
 \text{square source}
 &=\mathfrak b_sj^{-3}-\frac{3\mathfrak b_s}{2}j^{-4}+r_{j,N},\\
 \text{shape source}
 &=a_s\lambda j^{-2}-a_s\lambda j^{-3}+h_{j,N},
\end{align*}
where, uniformly for $0\le\tau\le T$,
\[
 |r_{j,N}|\le c_{s,T}\bigl(\chi_s(j)+\lambda j^{-4}\bigr),
 \qquad
 |h_{j,N}|\le c_{s,T}\bigl(\lambda j^{-4}+\lambda^2j^{-3}\bigr),
\]
and
\[
 \chi_s(j)=
 \begin{cases}
  j^{-2s-2},&1<s<3/2,\\
  (1+\log j)j^{-5},&s=3/2,\\
  j^{-5},&3/2<s<2.
 \end{cases}
\]
The three cases in $\chi_s$ come from the remainder of the power sum after
the first two Euler--Maclaurin terms: it is $O(1)$, $O(\log j)$, or
$O(j^{p-1})$ according as $p<1$, $p=1$, or $p>1$.  The replacement
$C_{s,\lambda}(k)-m_s$ contributes the same orders, together with
$O(\lambda j^p)$ from the shape-dependent part of
\eqref{eq:convex-level-bound}.  After multiplication by
$[j(j+1)]^{-s-1}$ this gives the stated bound for $r_{j,N}$.  Expanding
$k/(k+\lambda)=1+O(\lambda/k)$ gives the stated bound for $h_{j,N}$.
In particular,
\[
 \sum_{j=N}^\infty(|r_{j,N}|+|h_{j,N}|)=o_T(N^{-3}).
\]

It remains to sum the explicit terms.  We use
\begin{align*}
 \sum_{j=N}^\infty j^{-2}
 &=N^{-1}+\tfrac12N^{-2}+O(N^{-3}),\\
 \sum_{j=N}^\infty j^{-3}
 &=\tfrac12N^{-2}+\tfrac12N^{-3}+O(N^{-4}),\\
 \sum_{j=N}^\infty j^{-4}
 &=\tfrac13N^{-3}+O(N^{-4}).
\end{align*}
The cancellation at order $N^{-3}$ is worth making explicit.  For the square
part,
\[
 \mathfrak b_s\left(\frac12N^{-2}+\frac12N^{-3}\right)
 -\frac{3\mathfrak b_s}{2}\left(\frac13N^{-3}\right)
 =\frac{\mathfrak b_s}{2}N^{-2}+O(N^{-4}),
\]
while, since $\lambda=\tau/N$, the shape part satisfies
\[
 a_s\lambda\left(N^{-1}+\frac12N^{-2}\right)
 -a_s\lambda\left(\frac12N^{-2}\right)
 =a_s\tau N^{-2}+O_T(N^{-4}).
\]
Thus,
\[
 C_{s,\tau/N}(N)-m_s
 =\left(\frac{\mathfrak b_s}{2}+a_s\tau\right)N^{-2}
  +o_T(N^{-3}),
\]
which is \eqref{eq:second-order-level}.
\end{proof}

\begin{corollary}[Convex transition and crossing locations]
Let $1<s<2$, and define
\begin{equation}\label{eq:tau-star}
 \tau_s^*:=-\frac{\mathfrak b_s}{a_s}
 =\frac{s(s-1)(2-s)}{6(2s-1)}>0.
\end{equation}
If $N\lambda_N\to\tau$, then, away from the two boundary values:
\begin{center}
\begin{tabular}{c|c|c}
range of $\tau$ & $C_{s,\lambda_N}(N)-m_s$ &
$\Gamma_{N,s,\lambda_N}$\\ \hline
$0\le\tau<\tau_s^*/2$ & negative & negative\\
$\tau_s^*/2<\tau<\tau_s^*$ & positive & negative\\
$\tau>\tau_s^*$ & positive & positive.
\end{tabular}
\end{center}
Let $\lambda^{\mathrm{lev}}_{N,s}$ and $\lambda^{\mathrm{dec}}_{N,s}$ be the
unique level and decrement crossings from
Proposition~\ref{prop:all-N-crossings}.  Then
\begin{align}
 \lambda^{\mathrm{lev}}_{N,s}
 &=\frac{\tau_s^*}{2N}+o(N^{-2}),
 \label{eq:level-crossing-location}\\
 \lambda^{\mathrm{dec}}_{N,s}
 &=\frac{\tau_s^*}{N}-\frac{\tau_s^*}{2N^2}+o(N^{-2}).
 \label{eq:decrement-crossing-location}
\end{align}
Consequently, $\lambda^{\mathrm{lev}}_{N,s}<\lambda^{\mathrm{dec}}_{N,s}$ for all sufficiently large $N$, and the interval between them is a finite-size overshoot region: the normalized moment is already above its limiting value there but is still locally increasing with dimension.
\end{corollary}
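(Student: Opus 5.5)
The plan is to read everything off from Theorem~\ref{thm:critical-scales}(3), the second-order expansions \eqref{eq:second-order-dec}--\eqref{eq:second-order-level}, and the uniqueness of the crossings in Proposition~\ref{prop:all-N-crossings}.

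\emph{The constant $\tau_s^*$.} Using $\mathfrak b_s=\eta_sm_s/(2s-1)$, $\eta_s=s(s+1)(s-1)(s-2)/12$ and $a_s=(s+1)m_s/2$, the factors $m_s$ and $s+1$ cancel in $-\mathfrak b_s/a_s$. This gives the stated closed form for $\tau_s^*$. Since $\eta_s<0$ for $1<s<2$, we have $\mathfrak b_s<0<a_s$, hence $\tau_s^*>0$.

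\emph{The sign table.} Theorem~\ref{thm:critical-scales}(3) gives $N^3\Gamma_{N,s,\lambda_N}\to a_s(\tau-\tau_s^*)$ and $N^2(C_{s,\lambda_N}(N)-m_s)\to a_s(\tau-\tau_s^*/2)$. Because $a_s>0$, for $\tau$ away from $\tau_s^*/2$ and $\tau_s^*$ both quantities eventually carry the sign of the limit, which is exactly the table.

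\emph{Locating the crossings.} I will use a bracketing argument. For the level crossing, fix $\varepsilon>0$ and put $\tau_\pm:=\tau_s^*/2\pm\varepsilon/N$, which lies in a fixed compact $[0,T]$. By the uniform expansion \eqref{eq:second-order-level},
\[
 N^2\bigl(C_{s,\tau_\pm/N}(N)-m_s\bigr)=\pm a_s\varepsilon N^{-1}+o_T(N^{-1}).
\]
So for large $N$ the sign is $-$ at $\tau_-$ and $+$ at $\tau_+$. Since $\lambda\mapsto C_{s,\lambda}(N)$ is strictly increasing and its crossing is unique (Proposition~\ref{prop:all-N-crossings}), $\lambda^{\mathrm{lev}}_{N,s}$ lies in $(\tau_-/N,\tau_+/N)$. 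As $\varepsilon$ is arbitrary, \eqref{eq:level-crossing-location} follows.

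For the decrement crossing, I first solve $\mathfrak b_s+a_s\tau-(\tfrac32\mathfrak b_s+a_s\tau)/N=0$ to first order in $1/N$. Using $\tfrac32\mathfrak b_s+a_s\tau_s^*=\tfrac12\mathfrak b_s$, the candidate is
\[
 \tau_N=\tau_s^*+\frac{\mathfrak b_s}{2a_sN}=\tau_s^*-\frac{\tau_s^*}{2N}.
\]
Now put $\tau_\pm=\tau_N\pm\varepsilon/N$. Substituting into \eqref{eq:second-order-dec}, the bracket $\tfrac32\mathfrak b_s+a_s\tau$ changes by only $O(1/N)$, which contributes $O(N^{-2})$ after division by $N$. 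Hence
\[
 N^3\Gamma_{N,s,\tau_\pm/N}=\pm a_s\varepsilon N^{-1}+o_T(N^{-1}).
\]
Strict monotonicity in $\lambda$ (Theorem~\ref{thm:global-shape-monotonicity}) together with uniqueness of the zero then traps $\lambda^{\mathrm{dec}}_{N,s}$ in $(\tau_-/N,\tau_+/N)$, which gives \eqref{eq:decrement-crossing-location}.

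\emph{Ordering and the main obstacle.} The ordering $\lambda^{\mathrm{lev}}_{N,s}<\lambda^{\mathrm{dec}}_{N,s}$ is immediate from $\tau_s^*/2<\tau_s^*$. The overshoot description then follows from monotonicity in $\lambda$ of both the level and the decrement. The only delicate point is the bracketing step: the test points $\tau_\pm$ depend on $N$, so the $o_T(N^{-1})$ remainders must be uniform in $\tau$. This is exactly the uniformity on $[0,T]$ provided by the second-order theorem. One also has to check that the $\tau$-dependence of the $N^{-1}$ coefficient only produces $O(N^{-2})$ errors at the perturbed points.
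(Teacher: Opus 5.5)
Your proposal is correct and follows the paper's argument: you obtain the closed form of $\tau_s^*$ by substitution, the sign table from part (3) of the critical-scales theorem, and the crossing locations from the uniform second-order expansions combined with uniqueness and strict monotonicity in $\lambda$. The only difference is technical: you trap each root between test points $\tau_\pm/N$ a distance $\varepsilon/N^2$ apart, whereas the paper first shows $N\lambda^{\mathrm{lev}}_{N,s}\to\tau_s^*/2$ and $N\lambda^{\mathrm{dec}}_{N,s}\to\tau_s^*$, then evaluates the uniform expansions at the roots themselves and solves; the two are equivalent, and your version makes the required uniformity slightly more explicit.
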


\begin{proof}
For $1<s<2$ we have $\mathfrak b_s<0<a_s$.  The sign table follows from
Theorem~\ref{thm:critical-scales}, and \eqref{eq:tau-star} follows by
substituting \eqref{eq:a-def}, \eqref{eq:b-def}, and
\eqref{eq:F-leading}.

For the crossing locations, set
\[
 \tau_N^{\mathrm{lev}}:=N\lambda_{N,s}^{\mathrm{lev}},
 \qquad
 \tau_N^{\mathrm{dec}}:=N\lambda_{N,s}^{\mathrm{dec}}.
\]
The first-order limits in Theorem~\ref{thm:critical-scales}, together with
uniqueness, first place these two roots at
\[
 \tau_N^{\mathrm{lev}}\longrightarrow\frac{\tau_s^*}{2},
 \qquad
 \tau_N^{\mathrm{dec}}\longrightarrow\tau_s^*.
\]
Choose $T>\tau_s^*+1$.  For large $N$ both roots lie in $[0,T]$, so the
uniform second-order expansions may be evaluated at the roots themselves.
The level equation gives
\[
 0=\frac{\mathfrak b_s}{2}+a_s\tau_N^{\mathrm{lev}}+o(N^{-1}),
\]
and hence, since $\mathfrak b_s=-a_s\tau_s^*$,
\[
 \tau_N^{\mathrm{lev}}=\frac{\tau_s^*}{2}+o(N^{-1}).
\]
Similarly, the decrement equation gives
\[
 0=\mathfrak b_s+a_s\tau_N^{\mathrm{dec}}
 -\frac{\frac32\mathfrak b_s+a_s\tau_N^{\mathrm{dec}}}{N}
 +o(N^{-1}).
\]
Using $\tau_N^{\mathrm{dec}}\to\tau_s^*$ and
$\mathfrak b_s=-a_s\tau_s^*$ yields
\[
 \tau_N^{\mathrm{dec}}
 =\tau_s^*-\frac{\tau_s^*}{2N}+o(N^{-1}).
\]
Dividing the last two relations by $N$ proves
\eqref{eq:level-crossing-location} and
\eqref{eq:decrement-crossing-location}.
\end{proof}

\subsection{Fixed positive shape}

The shrinking-shape transitions disappear when $\lambda$ is fixed.  The shape
source then has order $N^{-2}$ and dominates the square correction in every
moment range.

\begin{corollary}[Fixed positive shape]
For every $s>0$ and every fixed $\lambda>0$, we have
\[
 \Gamma_{N,s,\lambda}\sim\frac{a_s\lambda}{N^2},
 \qquad
 C_{s,\lambda}(N)-m_s\sim\frac{a_s\lambda}{N}.
\]
In particular, even in the convex range $1<s<2$, every fixed positive
shape is eventually decreasing in dimension and approaches $m_s$ from
above.  In this convex range, the square sequence, by contrast, increases to
$m_s$ from below.
\end{corollary}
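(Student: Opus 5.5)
The plan is to apply Lemma~\ref{lem:source-asymptotics} with $\lambda$ held fixed and then telescope, as in the proof of Theorem~\ref{thm:critical-scales}. Fix $s>0$ and $\lambda>0$, and take $\Lambda=\lambda$ in \eqref{eq:shape-source-asymptotic}. The shape source is then $\frac{\lambda}{N^2}(a_s+o(1))$. Since $a_s=\frac{s+1}{2}m_s>0$, this is of exact order $N^{-2}$.

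The square source has to be shown to be $o(N^{-2})$ at fixed $\lambda$. Lemma~\ref{lem:source-asymptotics} only states its asymptotics along $\lambda_N\to0$, so I will not invoke it directly and will bound the sum instead. By \eqref{eq:F-leading} and the uniform boundedness in Lemma~\ref{lem:shape-uniform} on $[0,\lambda]$, the $k$th summand is $O_{s,\lambda}(k^{2s-2})$. I then split into three cases.
\begin{itemize}
\item For $s>1/2$ the sum is $O(N^{2s-1})$, so the square source is $O(N^{-3})$.
\item For $s=1/2$ the sum is $O(\log N)$, giving $O(N^{-3}\log N)$.
\item For $s<1/2$ the sum is $O(1)$, giving $O(N^{-2s-2})$.
\end{itemize}
In all three cases the square source is $o(N^{-2})$, and at $s\in\{1,2\}$ it is identically zero. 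This yields $\Gamma_{N,s,\lambda}\sim a_s\lambda N^{-2}$.

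For the level, I telescope as in \eqref{eq:telescoping-moving}:
\[
C_{s,\lambda}(N)-m_s=\sum_{j\ge N}\Gamma_{j,s,\lambda}.
\]
The equivalence $\Gamma_{j,s,\lambda}=a_s\lambda j^{-2}(1+o(1))$ holds as $j\to\infty$ with $\lambda$ fixed. Since the terms are eventually positive and of the summable form $j^{-2}$, the tail sum is asymptotic to $a_s\lambda\sum_{j\ge N}j^{-2}\sim a_s\lambda/N$. For $\varepsilon>0$, bounding the $o(1)$ by $\varepsilon$ for $j\ge N$ large makes this rigorous.

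The remaining claims follow directly. Since $a_s\lambda>0$, the decrement is eventually positive, so the sequence eventually decreases in $N$, and the level is eventually above $m_s$. The contrast with the square sequence in the convex range is exactly Theorem~\ref{thm:square-sign}, which gives $C_{s,0}(N)<m_s$ increasing for $1<s<2$.

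The only delicate step is the square-source bound at fixed $\lambda$ rather than along $\lambda_N\to0$. This needs only boundedness of $C_{s,\lambda}(k)$ in $k$, not convergence, so it causes no real difficulty.
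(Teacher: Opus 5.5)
Your proposal is correct and follows the paper's own proof essentially step for step. You take the shape source from \eqref{eq:shape-source-asymptotic} at fixed $\lambda$ and bound the square source by $O(N^{-2s-2})$, $O(N^{-3}\log N)$ or $O(N^{-3})$, using the summand bound $O(k^{2s-2})$ and uniform boundedness of $C_{s,\lambda}(k)$; you then telescope the order-$j^{-2}$ decrement to get the level asymptotic, and your remark that one must reuse the estimates of Lemma~\ref{lem:source-asymptotics} rather than its $\lambda_N\to0$ conclusion is exactly what the paper does.
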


\begin{proof}
For fixed $\lambda>0$, the shape source has order $N^{-2}$ by
\eqref{eq:shape-source-asymptotic}.  The estimates used in the proof of
Lemma~\ref{lem:source-asymptotics} show that the square source is
$O(N^{-2s-2})$ for $s<1/2$, $O((\log N)N^{-3})$ at $s=1/2$, and
$O(N^{-3})$ for $s>1/2$.  This proves the decrement asymptotic.  The same
estimates give a summable $O(j^{-2})$ bound for the full decrement for all
sufficiently large $j$.  Since $\sum_{j=N}^\infty j^{-2}\sim N^{-1}$,
telescoping the decrement asymptotic gives the level asymptotic.
\end{proof}

\begin{remark}[The polynomial cases]
The two zero-square-source cases are completely explicit:
\[
 C_{1,\lambda}(N)=1+\frac\lambda N,\qquad
 \Gamma_{N,1,\lambda}=\frac\lambda{N(N+1)}.
\]
Moreover,
\begin{align*}
 C_{2,\lambda}(N)&=2+\frac{3\lambda}{N}+\frac{\lambda^2}{N^2},\\
 \Gamma_{N,2,\lambda}
 &=\frac{3\lambda}{N(N+1)}
   +\frac{\lambda^2(2N+1)}{N^2(N+1)^2}.
\end{align*}
Thus, $\mathfrak b_1=\mathfrak b_2=0$, $a_1=1$, $a_2=3$, and the third part of
Theorem~\ref{thm:critical-scales} includes both polynomial cases.
\end{remark}

\section{The half moment and average singular values}\label{sec:half}

\subsection{Unitary estimates used in Paper~II}

We now specialize to $s=1/2$.  This is the logarithmic critical moment order.
At integer shape,
$C_{1/2,\lambda}(N)=N^{-3/2}\E\|X_{N,\lambda}^{\mathbb C}\|_*$, so this is
also the average-singular-value statistic that motivated the original problem.
The exact two-source formula gives both the positive-shape estimates needed in
Paper~II and the bounded-shape expansion below.  Write
\[
 C_\lambda(N):=C_{1/2,\lambda}(N),\qquad
 \Gamma_{N,\lambda}:=\Gamma_{N,1/2,\lambda},\qquad
 m_{1/2}=\frac8{3\pi}.
\]
Here
\[
 F_{1/2}(x)=(1+x)^{3/2}+(1-x)^{3/2}-2-\frac34x^2
\]
and Proposition~\ref{prop:exact} reads
\begin{equation}\label{eq:half-exact}
 \Gamma_{N,\lambda}
 =\frac{1}{[N(N+1)]^{3/2}}
 \left\{
 \sum_{k=1}^Nk^3F_{1/2}(1/k)C_\lambda(k)
 +\frac{3\lambda}{4}\sum_{k=1}^N\frac{k}{k+\lambda}C_\lambda(k)
 \right\}.
\end{equation}
The exact half-moment formula gives the shape-dependent comparison bounds needed in
Paper~II.

\begin{corollary}[Half-moment shape reserves]
For every $N\ge1$ and $\lambda\ge0$,
\[
 \partial_\lambda\Gamma_{N,\lambda}
 >\frac{2}{\pi[N(N+1)]^{3/2}}
 \sum_{k=1}^N\left(\frac{k}{k+\lambda}\right)^2,
\]
and
\[
 \Gamma_{N,\lambda}
 >\frac{2\lambda}{\pi[N(N+1)]^{3/2}}
 \sum_{k=1}^N\frac{k}{k+\lambda}.
\]
\end{corollary}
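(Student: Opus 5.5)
This corollary is the specialization $s=1/2$ of Proposition~\ref{prop:shape-derivative-reserve}, since $1/2\in(0,1)$ lies on an open sign-positive band. I will therefore check the constants and then invoke the two inequalities \eqref{eq:general-shape-derivative-reserve} and \eqref{eq:general-positive-shape-reserve}, using the strict versions available on the open band.

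The constants are as follows. At $s=1/2$ we have $s(s+1)=3/4$, $m_{1/2}=8/(3\pi)$, and hence $s(s+1)m_s=2/\pi$. Moreover $[N(N+1)]^{s+1}=[N(N+1)]^{3/2}$ and $k^{2s}=k$, so $k^{2s+1}/(k+\lambda)^2=(k/(k+\lambda))^2$. With these values, \eqref{eq:general-shape-derivative-reserve} becomes exactly the first displayed inequality. It is strict because $s\in(0,1)$: in the proof of Proposition~\ref{prop:shape-derivative-reserve}, the strict bound $C_\lambda(k)>m_{1/2}$ from Corollary~\ref{cor:concave-all-shapes} is applied to a positive coefficient.

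For the second inequality, \eqref{eq:general-positive-shape-reserve} with $k^{2s}/(k+\lambda)=k/(k+\lambda)$ gives exactly the displayed bound for $\lambda>0$. For $\lambda=0$ the right-hand side vanishes, and $\Gamma_{N,0}>0$ follows from Theorem~\ref{thm:square-sign}; this is the $\lambda=0$ clause of the proposition on the open band. Alternatively, one can argue directly from \eqref{eq:half-exact}: the square source is strictly positive because $F_{1/2}>0$ by Lemma~\ref{lem:F-sign}, and the shape source exceeds $\frac{3\lambda}{4}m_{1/2}\sum k/(k+\lambda)$.

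The only point requiring care is the strictness at $\lambda=0$ and the arithmetic $\tfrac34\cdot\tfrac{8}{3\pi}=\tfrac2\pi$. Neither step involves a genuine obstacle.
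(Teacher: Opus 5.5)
Your proposal is correct and follows the paper's proof, which also specializes Proposition~\ref{prop:shape-derivative-reserve} (with strictness taken from its proof on the open band) to $s=1/2$, using $s(s+1)m_{1/2}=\tfrac34\cdot\tfrac{8}{3\pi}=\tfrac{2}{\pi}$. Your treatment of $\lambda=0$ through the strictly positive square source is the same clause of that proposition the paper relies on.
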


\begin{proof}
Both assertions follow from Proposition~\ref{prop:shape-derivative-reserve}
and its proof, using $m_{1/2}=8/(3\pi)$.
\end{proof}

\subsection{The square half-moment benchmark}
At $\lambda=0$, Abreu and Patil obtain a sharper square-specific description
than is needed for the two-parameter theory developed here.  With
\[
 \ell_N:=\log N+\gamma+6\log2,\qquad
 \mathsf H_N:=\sum_{k=1}^N\frac1k,
\]
their non-asymptotic estimate \cite[Theorem~1]{AbreuPatil} gives
\begin{equation*}
 0<\frac{\ell_N-10/3}{8\pi[N(N+1)]^{3/2}}
 <\Gamma_{N,0}
 <\frac{\mathsf H_N+6\log2-10/3}{8\pi[N(N+1)]^{3/2}}.
\end{equation*}
They also prove complete asymptotic expansions; in particular,
\begin{align}
 \Gamma_{N,0}
 &=\frac{\ell_N-10/3}{8\pi N^3}
   +O\!\left(\frac{\log N}{N^4}\right),
 \label{eq:AP-square-dec-asymptotic}\\
 C_0(N)-\frac8{3\pi}
 &=\frac{\ell_N-17/6}{16\pi N^2}
   +O\!\left(\frac{\log N}{N^4}\right).\notag
\end{align}
The square case is therefore known more precisely than is needed here.  The next theorem instead keeps the shape parameter in a bounded interval and identifies the coefficient that vanishes at $\lambda=1/2$.

\subsection{A bounded-shape two-term expansion}

\begin{theorem}[Uniform two-term half-moment expansion]
For every $\Lambda<\infty$, uniformly for $0\le\lambda\le\Lambda$,
\begin{align}
 \Gamma_{N,\lambda}
 &=\frac{2\lambda}{\pi N^2}
 +\frac{1-4\lambda^2}{8\pi}\frac{\log N}{N^3}
 +O_\Lambda(N^{-3}),
 \label{eq:half-refined-dec}\\
 C_\lambda(N)-\frac8{3\pi}
 &=\frac{2\lambda}{\pi N}
 +\frac{1-4\lambda^2}{16\pi}\frac{\log N}{N^2}
 +O_\Lambda(N^{-2}).
 \label{eq:half-refined-level}
\end{align}
\end{theorem}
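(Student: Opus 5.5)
The plan is to work directly from the exact half-moment formula \eqref{eq:half-exact}, expanding both sums to relative accuracy $O_\Lambda(1)$ beyond their leading terms. The $\lambda^2\log N$ contribution comes from the shape source, and seeing it requires knowing $C_\lambda(k)-m_{1/2}$ to first order in $1/k$. I would therefore use a two-pass bootstrap. All constants are taken uniform over $0\le\lambda\le\Lambda$, using the uniform boundedness in Lemma~\ref{lem:shape-uniform}. Throughout, $a_{1/2}=\tfrac32m_{1/2}/2=2/\pi$ and $\eta_{1/2}m_{1/2}=1/(8\pi)$.

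\emph{First pass (crude level bound).} Combine Proposition~\ref{prop:exact}, the uniform boundedness of $C_\lambda(k)$, and $k^3F_{1/2}(1/k)=\eta_{1/2}k^{-1}+O(k^{-3})$. This gives $|\Gamma_{j,\lambda}|\le c_\Lambda(\lambda j^{-2}+(1+\log j)j^{-3})$. Telescoping as in \eqref{eq:telescoping-moving}, with $\lambda$ fixed, yields $|C_\lambda(k)-m_{1/2}|\le c_\Lambda k^{-1}$. Feeding this back into \eqref{eq:half-exact} gives the following:
\begin{itemize}
\item the square source equals $\tfrac1{8\pi}\log N+O_\Lambda(1)$, since the error is $\sum_k k^{-2}$;
\item the shape source equals $\tfrac{3\lambda}{4}\bigl(m_{1/2}N+O_\Lambda(\log N)\bigr)$.
\end{itemize}
Hence $\Gamma_{N,\lambda}=\tfrac{2\lambda}{\pi N^2}+O_\Lambda(\log N/N^3)$. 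Summing over $j\ge N$, with $\sum_{j\ge N}j^{-2}=N^{-1}+O(N^{-2})$, gives the refined level bound
\[
 C_\lambda(k)-\frac8{3\pi}=\frac{2\lambda}{\pi k}+O_\Lambda\!\left(\frac{1+\log k}{k^2}\right).
\]

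\emph{Second pass (decrement).} Insert this refined bound, together with $k/(k+\lambda)=1-\lambda/k+O_\Lambda(k^{-2})$, into the shape source:
\[
 \frac{3\lambda}{4}\sum_{k=1}^N\Bigl(1-\frac{\lambda}{k}\Bigr)\Bigl(m_{1/2}+\frac{2\lambda}{\pi k}\Bigr)+O_\Lambda(1)
 =\frac{3\lambda}{4}\Bigl(m_{1/2}N+\Bigl(\frac{2\lambda}{\pi}-\frac{8\lambda}{3\pi}\Bigr)\log N\Bigr)+O_\Lambda(1).
\]
This equals $\tfrac{2\lambda}{\pi}N-\tfrac{\lambda^2}{2\pi}\log N+O_\Lambda(1)$. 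The square source is unchanged: $\tfrac1{8\pi}\log N+O_\Lambda(1)$. Multiply by $[N(N+1)]^{-3/2}=N^{-3}\bigl(1-\tfrac3{2N}+O(N^{-2})\bigr)$. The correction $-\tfrac{3}{2N}$ acting on $\tfrac{2\lambda}{\pi}N$ contributes only $O_\Lambda(N^{-3})$. The logarithmic terms combine to $\tfrac{1-4\lambda^2}{8\pi}\tfrac{\log N}{N^3}$, which proves \eqref{eq:half-refined-dec}.

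\emph{Level.} Telescope \eqref{eq:half-refined-dec} over $j\ge N$ with $\lambda$ held fixed, noting that the remainder is summable uniformly. Use
\[
 \sum_{j\ge N}j^{-2}=N^{-1}+O(N^{-2}),\qquad
 \sum_{j\ge N}\frac{\log j}{j^3}=\frac{\log N}{2N^2}+O(N^{-2}),\qquad
 \sum_{j\ge N}j^{-3}=O(N^{-2}).
\]
This gives \eqref{eq:half-refined-level}.

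The main obstacle is the second pass for the shape source. There one must confirm that every error term there is genuinely $O_\Lambda(1)$ before dividing by $N^3$, so that none leaks into the $\log N$ coefficient. This is why the $O((1+\log k)/k^2)$ remainder in the level bound must be established first: multiplied by $\lambda$ and summed, it is still bounded.
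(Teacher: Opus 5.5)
Your proposal is correct and follows essentially the same route as the paper. Both arguments run a first-order bootstrap to get $C_\lambda(k)-m_{1/2}=\frac{2\lambda}{\pi k}+O_\Lambda\bigl((1+\log k)/k^2\bigr)$, then re-expand both sums in \eqref{eq:half-exact}; there the $k^{-1}$ coefficient $\frac{2\lambda}{\pi}-\lambda m_{1/2}=-\frac{2\lambda}{3\pi}$ of the shape sum combines with the square-source logarithm $\frac{1}{8\pi}\log N$ to produce $1-4\lambda^2$. The only cosmetic difference is that you telescope an absolute bound on $\Gamma_{j,\lambda}$, whereas the paper uses the positivity from Corollary~\ref{cor:concave-all-shapes}.
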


\begin{proof}
\smallskip\noindent\emph{First-order control.}
Before extracting the logarithmic coefficient, we need the first correction
to $C_\lambda(k)$.  Lemma~\ref{lem:shape-uniform} supplies a bound
$C_\lambda(k)\le M_\Lambda$.  Since $k^3F_{1/2}(1/k)=O(k^{-1})$, the first sum in
\eqref{eq:half-exact} is $O_\Lambda(\log N)$, while the second is $O_\Lambda(N)$.  Hence
$\Gamma_{N,\lambda}=O_\Lambda(N^{-2})$.  By Corollary~\ref{cor:concave-all-shapes},
$\Gamma_{N,\lambda}>0$, and therefore
\[
 0\le C_\lambda(N)-m_{1/2}
 =\sum_{j=N}^\infty\Gamma_{j,\lambda}=O_\Lambda(N^{-1}).
\]
Consequently,
\[
 \frac1N\sum_{k=1}^N\frac{k}{k+\lambda}C_\lambda(k)
 =m_{1/2}+O_\Lambda\!\left(\frac{\log N}{N}\right),
\]
and substitution in \eqref{eq:half-exact}, followed by telescoping, gives
\begin{align}
 \Gamma_{N,\lambda}
 &=\frac{2\lambda}{\pi N^2}
 +O_\Lambda\!\left(\frac{\log N}{N^3}\right),\notag
 \\
 C_\lambda(N)
 &=m_{1/2}+\frac{2\lambda}{\pi N}
 +O_\Lambda\!\left(\frac{\log N}{N^2}\right).
 \label{eq:half-first-level}
\end{align}
\smallskip\noindent\emph{The square-source sum.}
The square source contributes one logarithm.  With the first-order estimate
available, the binomial expansion gives, for $k\ge2$,
\begin{align*}
 k^3F_{1/2}(1/k)&=\frac{3}{64k}+O(k^{-3}),\\
 k^3F_{1/2}(1/k)C_\lambda(k)
 &=\frac1{8\pi k}+O_\Lambda(k^{-2}).
\end{align*}
The finitely many omitted initial terms are absorbed into the uniform
$O_\Lambda(1)$ constants below.  Consequently,
\begin{equation}\label{eq:half-sum-one}
 \sum_{k=1}^Nk^3F_{1/2}(1/k)C_\lambda(k)
 =\frac{\log N}{8\pi}+O_\Lambda(1).
\end{equation}
\smallskip\noindent\emph{The shape-source sum.}
The shape source contributes a second logarithm through the first correction
to $C_\lambda(k)$ and the expansion of $k/(k+\lambda)$.  For $k\ge2$,
\[
 \frac{k}{k+\lambda}=1-\frac\lambda k+O_\Lambda(k^{-2}),
\]
and \eqref{eq:half-first-level} imply
\begin{equation}\label{eq:half-sum-two}
 \sum_{k=1}^N\frac{k}{k+\lambda}C_\lambda(k)
 =m_{1/2}N-\frac{2\lambda}{3\pi}\log N+O_\Lambda(1).
\end{equation}
Indeed, the coefficient of $k^{-1}$ is
$2\lambda/\pi-m_{1/2}\lambda=-2\lambda/(3\pi)$.

\smallskip\noindent\emph{Combining the two sums.}
The coefficient $1-4\lambda^2$ now comes from adding the logarithmic term of
the square source to the logarithmic correction in the shape source.
Substitution of \eqref{eq:half-sum-one} and
\eqref{eq:half-sum-two} into \eqref{eq:half-exact}, together with
$[N(N+1)]^{3/2}=N^3(1+O(N^{-1}))$, proves
\eqref{eq:half-refined-dec}.  Summing from $N$ to infinity and using
\[
 \sum_{j=N}^\infty j^{-2}=N^{-1}+O(N^{-2}),\qquad
 \sum_{j=N}^\infty\frac{\log j}{j^3}
 =\frac{\log N}{2N^2}+O(N^{-2})
\]
proves \eqref{eq:half-refined-level}.
\end{proof}

Two different effects occur at the half moment.  The shrinking-shape transition
has scale $\lambda\asymp(\log N)/N$ because the square and shape sources first
have comparable size there.  Separately, at the fixed shape $\lambda=1/2$ the
logarithmic coefficient in \eqref{eq:half-refined-dec} vanishes.  This second
cancellation belongs to the bounded-shape expansion and is not the
shrinking-shape transition.

\begin{remark}[Compatibility with the square literature]
At $\lambda=0$, the leading logarithmic decrement in
\eqref{eq:half-refined-dec} agrees with the term already implicit in the
positive-form proof of \cite{HutnikComplexOriginal} and with the sharper
Abreu--Patil expansion \eqref{eq:AP-square-dec-asymptotic}.  The new content
of the uniform expansion above is not a further square refinement: it
is the bounded-shape uniform law, including the explicit coefficient
$1-4\lambda^2$ and its cancellation at $\lambda=1/2$.  This distinction is
useful in Paper~II, where the square endpoint can use the sharper
Abreu--Patil bound while the positive-shape and derivative reserves must come
from the present two-parameter theory.
\end{remark}

\section{Conclusion}

The two-source formula gives a single finite-dimensional description across
all moment orders.  The square source is present at $\lambda=0$ and changes
sign at $s=1$ and $s=2$, whereas the shape source is nonnegative and is
strictly positive for $\lambda>0$.  In addition to the square sign diagram,
the full decrement satisfies, for every $s>0$ and every $N\ge1$,
\[
 \partial_\lambda\Gamma_{N,s,\lambda}>0.
\]
Thus, increasing the Laguerre shape always increases the dimension decrement,
including the convex range where the square decrement is negative.

The size of the shape perturbation needed to affect the leading square
correction depends on $s$.  The critical scale is $N^{-2s}$ for
$0<s<1/2$, $(\log N)/N$ at $s=1/2$, and $N^{-1}$ for $s>1/2$.  The
logarithm at the half moment comes from the harmonic leading term of the
square source.  For integer shape this is also the case with the direct
interpretation as the expected normalized nuclear norm of a complex Gaussian
matrix.

For $1<s<2$, the square and shape sources have opposite signs.  The level and
decrement therefore have separate transition points.  Both are unique for
every finite $N$, and
\[
 \lambda^{\mathrm{lev}}_{N,s}
 =\frac{\tau_s^*}{2N}+o(N^{-2}),
 \qquad
 \lambda^{\mathrm{dec}}_{N,s}
 =\frac{\tau_s^*}{N}-\frac{\tau_s^*}{2N^2}+o(N^{-2}).
\]
For all sufficiently large $N$, the normalized moment lies above its
Marchenko--Pastur limit between these two values but still increases when the
dimension is raised by one.  Hence the side from which the limit is approached
and the local direction of dimension monotonicity are distinct finite-dimensional
properties.  For $s>2$, both sources are nonnegative for every $\lambda\ge0$,
so no analogous sign transition occurs.

The closed LUE dimension recurrence is used throughout at finite $N$.
Marchenko--Pastur convergence enters only when the limiting moment and the
coefficients of the large-$N$ expansions are identified.  The shape
monotonicity and the exact crossing bounds themselves are finite-dimensional
consequences of the recurrence and positive association.

At $s=1/2$, the positive-shape and derivative estimates above are the unitary
inputs used in Paper~II~\cite{HutnikLOE}.  At square shape Paper~II uses the
sharper Abreu--Patil estimate instead.  For bounded positive shape, the
two-term expansion here supplies the unitary logarithmic coefficient that is
combined there with the orthogonal correction.

Beyond the half moment treated in Paper~II, the same full moment-order
question can be asked for the orthogonal and symplectic Laguerre ensembles.
Their dimension recurrences are coupled rather than closed, so extending the
present two-source analysis to general $s$ would require a different argument.

\section*{Acknowledgement}
This work was supported by the Slovak Research and Development Agency under
contract No.~APVV-25-0144. During the preparation of this work, the author used ChatGPT (OpenAI) as an auxiliary tool for literature searches, organization and language editing of the exposition, and for checking and simplifying intermediate calculations, asymptotic expansions, and parts of proofs. The author takes full responsibility for the content.

\end{document}